\documentclass{article}

\RequirePackage[amsmath, amsthm, noinfoline]{imsart}

\usepackage{amssymb, graphicx, accents, fullpage, caption,color}

\usepackage{tabularx}

\usepackage[labelformat=simple]{subcaption}

\usepackage{float}

\def\Uh{\hat{U}}

\def\Ft{\mathcal{F}_{t}}
\def\E{\mathcal{E}}
\def\H{\mathcal{H}}
\def\pih{\hat{\pi}}

\newcommand{\U}[1]{U^{(#1)}}

\def\Umer{U^{\text{Mer}}}

\def\ul{\underline}

\newcommand{\ol}[1]{\overline{#1}}
\newcommand{\C}[1]{c_{#1}}

\newtheorem{lemma}{Lemma}[section]
\newtheorem{theorem}{Theorem}[section]

\theoremstyle{remark}
\newtheorem{remark}{Remark}[section]

\theoremstyle{assumption}
\newtheorem{assumption}{Assumption}[section]

\newtheoremstyle{case}
{\topsep}
{\topsep}
{\itshape}
{30pt} 
{\it} 
{}
{ }
{\thmname{#1}\thmnumber{ #2}:\thmnote{ (#3)}}

\theoremstyle{case}
\newtheorem{case}{Case}

\theoremstyle{definition}
\newtheorem{definition}{Definition}[section]

\DeclareMathOperator*{\esssup}{ess\,sup}

\numberwithin{equation}{section}

\begin{document}
\begin{frontmatter}
\title{Asymptotic approximation of optimal portfolio for small time horizons}

\author{\fnms{Rohini Kumar}\thanks{{\tt rkumar@math.wayne.edu}}
}
\address{\noindent Dept. Mathematics, \\
 Wayne State University, \\
 Detroit, MI 48202
}

\author{\fnms{Hussein Nasralah}\thanks{{\tt hussein.nasralah@wayne.edu}}
}
\address{\noindent Dept. Mathematics, \\
 Wayne State University, \\
 Detroit, MI 48202\\
 \quad \\
  \quad \\
\today}

\smallskip
\smallskip

\begin{abstract}
We consider the problem of portfolio optimization in a simple incomplete market and under a general utility function.  By working with the associated Hamilton-Jacobi-Bellman partial differential equation (HJB PDE), we obtain a closed-form formula for a trading strategy which approximates the optimal trading strategy when the time horizon is small.  This strategy is generated by a first order approximation to the value function.  The approximate value function is obtained by constructing classical  sub- and super-solutions to the HJB PDE using a formal expansion in powers of  horizon time. Martingale inequalities are used to sandwich the true value function between the constructed sub- and super-solutions.  A rigorous proof of the accuracy of the approximation formulas is given. We end with a heuristic scheme for extending our small-time approximating formulas to approximating formulas in a finite time horizon.

\end{abstract}

\end{frontmatter}

\section{Introduction}
In this paper, we study the problem of portfolio optimization when the time horizon is small. For simplicity, we consider a financial market with two assets, one risky and one risk-free.  Given a pair $(t,x) \in [0,T]\times(0,\infty)$ of initial time $t$ and initial wealth $x$, an investor wishes to invest in such a way as to maximize her expected utility of wealth at time $T$ given today's information.  Specifically, if $U_{T}(x)$ is a function modeling the investor's utility of wealth at the terminal time $T$, then the investor wishes to choose a portfolio $\pi$ which maximizes $E[U_{T} | \Ft]$ ($\Ft$ being the sigma-algebra that informally represents information up to time $t$).

Under Markovian assumptions on the price process of the risky asset, this optimization problem can be studied via the associated HJB equation, as, for example, in \cite{Lorig2016LSV,Merton69Lifetime,Nadtochiy13ApproxOIP,Zariphopoulou01Val}.  This portfolio optimization problem was first studied in a continuous time setting by Merton \cite{Merton69Lifetime, Merton71Optimum} in a complete market.  The utility maximization problem can also be studied using duality arguments as in  \cite{CvitanicKaratzas92ConvexDuality,KaratzasEtAl91MartingaleAndDuality,KramkovSchachermayer99AsymptoticElasticity,Schachermayer01NegativeWealth}.

Portfolio optimization has also been studied under the assumption of an infinite time horizon, for example in \cite{Merton69Lifetime, Merton71Optimum, Pang04InfiniteTimePower, Pang06InfiniteTimeLog}.   In \cite{Pang04InfiniteTimePower, Pang06InfiniteTimeLog}, the author studies the problem of optimal investment and consumption in an infinite time horizon assuming a model with stochastic interest rate and where risky asset price is a geometric Brownian motion.

Tehranchi \cite{Tehranchi04ExpInc} studied the problem under the assumption of an incomplete market, where  the market is driven by two Brownian motions and the asset price is not necessarily Markovian. The absence of the Markovian structure precludes the use of the dynamic programming principle.  By proving H\"older-type inequalities for functionals of correlated Brownian motions, Tehranchi \cite{Tehranchi04ExpInc} was able to study the portfolio optimization problem when the utility function is a product of a function of the wealth and a function of a correlated stochastic factor.   Explicit formulas were obtained in \cite{Tehranchi04ExpInc} when the function of wealth is an exponential function, a logarithmic function, and a power function.

In an incomplete market, few explicit formulas for the optimal portfolio exist in the literature and attempts have been made to obtain approximating formulas.  We mention some of the work in this direction where the risky asset price model is Markovian and has correlated stochastic factors.   In \cite{Zariphopoulou01Val}, the utility function is assumed to be of Constant Relative Risk Aversion (CRRA) type, i.e., a product of a power function in wealth and a function depending on the stochastic factor. Under this assumption, Zariphopoulou in \cite{Zariphopoulou01Val} is able to obtain the value function in terms of the solution to a linear parabolic PDE. The results in \cite{Zariphopoulou01Val} have proved useful for computing explicit formulas for specific examples.   
In \cite{Lorig2016LSV}, Lorig and Sircar consider the problem of portfolio optimization in finite horizon assuming a local stochastic volatility model for a risky asset. They use a Taylor series expansion of the model coefficients to obtain approximating formulas for the value function and optimal portfolio. While approximating formulas are obtained for general utility functions, accuracy of the approximation is established only in the case of power utilities.    Fouque et al., in  \cite{Fouque2015Portfolio}, assume a model with multiscale stochastic factors and by asymptotic analysis  obtain approximating formulas for the optimal portfolio.

In \cite{Fouque2015Portfolio,Lorig2016LSV}, well-posedness of the associated HJB equation is not established and the authors work under the assumption that the value function is the classical solution of the HJB equation with a sufficient degree of regularity.  In \cite{Nadtochiy13ApproxOIP}, as well as in our paper, no such assumption is made. In \cite{Nadtochiy13ApproxOIP}, Nadtochiy and Zariphopoulou state that their model is the ``simplest and most direct extension" \cite{Nadtochiy13ApproxOIP} to an incomplete market of the model introduced by Merton in \cite{Merton69Lifetime,Merton71Optimum}. In this model, the utility function depends only on wealth.  Instead of working directly with the associated HJB equation, the authors work with the marginal HJB equation, which they prove has a unique viscosity solution (see \cite{Crandall92UGvisc} for more information on viscosity solutions).  Without assuming the value function satisfies the HJB equation,  Nadtochiy and Zariphopoulou, in  \cite{Nadtochiy13ApproxOIP}, prove that the integral of the viscosity solution of the marginal HJB equation is indeed the value function.  In addition, the authors derive approximations to the optimal portfolio which they term ``$\epsilon$-optimal portfolios" \cite{Nadtochiy13ApproxOIP}.

In this paper, we find a closed-form formula for an approximation to the optimal portfolio in a small time horizon under a stochastic volatility model for the risky asset price.  As the well-posedness of the associated HJB equation is not established,  we do not assume the value function is a classical solution of the HJB equation.  Additionally, we do not assume a specific form for the utility function. We only assume that the asymptotic behavior of our utility function as wealth approaches $0$ or $\infty$ is as a logarithmic utility, or sum of power utilities.  Accuracy of the approximation is established. We then use the small time approximation to iteratively build an approximation on longer time horizons.

Our discussion and results are organized as follows: we state our model assumptions, as well as our assumptions on the behavior of the utility function and its derivatives, in section 2.  The main theorem is proved in section 3.  In section 4, we build our close-to-optimal portfolio and verify the degree of closeness. In section 5, we graphically illustrate our small time approximation by an example.  The results in section 4 are then extended to longer time horizons in section 6 and applied to an example.

\section{Model and Assumptions}

We consider the following simple incomplete market model, as in \cite{Nadtochiy13ApproxOIP}, however our assumptions on the terminal utility function will be different from those considered in \cite{Nadtochiy13ApproxOIP}. 

Consider a market consisting of one risky asset (e.g., a stock) with price process $S_{t}$ and one riskless asset (e.g., a bond).  The price process of the risky asset satisfies
	\begin{equation} \label{stockprice}
	dS_{t} = \mu(Y_{t}) S_{t}\,dt + \sigma(Y_{t}) S_{t} \,dW^{1}_{t},
	\end{equation}
where $Y_{t}$ is a stochastic factor which evolves as
	\begin{equation} \label{stochasticfactor}
	dY_{t} = b(Y_{t}) \,dt + a(Y_{t}) (\rho dW^{1}_{t} + \sqrt{1 - \rho^{2}} \,dW^{2}_{t}).
	\end{equation}
The vector $W_{t} = (W^{1}_{t}, W^{2}_{t})$ is a two-dimensional standard Brownian motion adapted to the natural filtration $(\mathcal{F}_{t})_{t \in [0,T]}$ given by $\mathcal{F}_{t} = \sigma(W_{s} : 0 \leq s \leq t )$, and $\rho$ satisfies $-1 < \rho < 1$.  We also define the Sharpe ratio $\lambda(Y_{t}) := \frac{\mu(Y_{t}) - r}{\sigma(Y_{t})}$, where $r$ is the risk free interest rate.

\begin{assumption}[Model Assumptions]\label{modelassumptions}
Denote as $C(\Bbb R)$ the space of continuous functions $f: \Bbb R \to \Bbb R$, while $C^{k}(\Bbb R)$ is the space of $k$-times continuously differentiable functions $g: \Bbb R \to \Bbb R$ (for $k \geq 1, k \in \Bbb N$).  The coefficients in the stochastic differential equations (SDEs) \eqref{stockprice} and \eqref{stochasticfactor}, as well as $\lambda$, satisfy the following conditions (as in Assumption 1 of \cite{Nadtochiy13ApproxOIP}):
	\begin{enumerate}
	\item $\mu, \sigma \in C(\Bbb R)$ with $\sigma$ strictly positive.
	\item $b \in C^{1}(\Bbb R)$ and $\lambda, a \in C^{2}(\Bbb R)$ with $a$ strictly positive.
	\item There exists a constant $\C1>0$ such that 
		\begin{equation*}
		|a| + |\frac{1}{a}| + |a'| + |a''| + |b| + |b'| + |\lambda| + |\lambda'| + |\lambda''| \leq \C1.
		\end{equation*}
	\end{enumerate}
\end{assumption}	

\begin{assumption}[Utility Assumptions] \label{utilityassumptions} 
	We denote the investor's terminal utility function by $U_{T}(x)$.  We assume $U_{T}(x)$ is a strictly increasing, concave function belonging to $C^{5}(\Bbb R)$.  In addition, we make the following assumptions on the asymptotic growth of the utility function: \\$U_T(x)$ is such that, either
	
	\begin{case} Conditions \eqref{u(x)} - \eqref{uprime2(x)} hold for $M(x) := \log(x)$; \end{case} or
	\begin{case} Conditions \eqref{u(x)} - \eqref{uprime2(x)} hold for $M(x) :=\frac{x^{1-\alpha}}{1-\alpha} +\frac{x^{1-\beta}}{1-\beta}$, $\alpha, \beta \neq 1$ and positive. \end{case} 

Asymptotic growth conditions:
		\begin{equation} \label{u(x)}
	0 < \inf \limits_{x > 0} \left (\frac{U_{T}'(x)}{M'(x)}\right ) \leq \sup \limits_{x > 0} \left (\frac{U_{T}'(x)}{M'(x)}\right )  < \infty \\
		\end{equation} 
	
		\begin{equation} \label{uprime(x)}
		0 < \inf \limits_{x > 0} \left (\frac{U_{T}''(x)}{M''(x)}\right ) \leq \sup \limits_{x > 0} \left (\frac{U_{T}''(x)}{M''(x)}\right )  < \infty \\
		\end{equation}

		\begin{equation}\label{Rprime(x)}
		0 < \inf \limits_{x > 0} \left (\frac{U_{T}^{(3)}(x)}{M^{(3)}(x)}\right ) \leq \sup \limits_{x > 0} \left (\frac{U_{T}^{(3)}(x)}{M^{(3)}(x)}\right )  < \infty \\
		\end{equation}		
			
		\begin{equation}\label{uprime2(x)}
		0 < \inf \limits_{x > 0} \left (\frac{U_{T}^{(4)}(x)}{M^{(4)}(x)}\right ) \leq \sup \limits_{x > 0} \left (\frac{U_{T}^{(4)}(x)}{M^{(4)}(x)}\right )  < \infty. \\
		\end{equation}	
\end{assumption}

	\begin{remark}
	These assumptions allow for any strictly increasing, concave utility function in $C^5(\mathbb R)$ that behaves as a logarithmic function, or as different power functions, asymptotically as wealth approaches $0$ and $\infty$.  Three examples of such utility functions are:
		\begin{enumerate}
		\item {\it Power utility:} $U_{T}(x) = \frac{x^{1-\gamma}}{1-\gamma}$, for some positive $\gamma \neq 1$. 
		\item {\it Mixture of power utilities: $U_{T}(x) = \C1\frac{x^{1-\alpha}}{1-\alpha} + \C2\frac{x^{1-\beta}}{1-\beta}$, for $\C1, \C2 > 0$ and for positive $\alpha, \beta \neq 1$}.
		\item {\it Log utility:} $U_{T}(x) = \log(x)$.
		\end{enumerate}
	\end{remark}
	\begin{remark}
	While the utility assumptions in Assumption \ref{utilityassumptions} are similar to those in \cite{Nadtochiy13ApproxOIP}, these assumptions allow for logarithmic utility functions, as well as utility functions described by mixtures of power functions; these two examples are not covered by the results in paper \cite{Nadtochiy13ApproxOIP}.
	\end{remark}
	As the investor will be investing in one risky and one risk-free asset, $\pi_{t}$ will denote the discounted amount of wealth invested into the risky asset at time $t$.  We wish to consider only self-financing trading strategies, and thus, denoting by $\pi^{0}_{t}$ the discounted amount of wealth invested in the risk-free asset, choosing $\pi_{t}$ necessarily implies the value of $\pi^{0}_{t}$.  Because of this, as in \cite{Nadtochiy13ApproxOIP}, we will identify each trading strategy with the amount $\pi_{t}$ invested in the risky asset.  From this we can define the discounted wealth process $X^{\pi}_{t} := \pi_{t} + \pi^{0}_{t}$.  This process evolves in the following way
	\begin{equation} \label{dscw}
	dX^{\pi}_{t} = \sigma(Y_{t})\pi_{t}(\lambda(Y_{t})\,dt + \,dW^{1}_{t}) \text{ for } t \in [0,T].
	\end{equation}
Henceforth, we will denote $\sigma(Y_{s})$ by $\sigma_{s}$ and $\lambda(Y_{s})$ by $\lambda_{s}$.
	
\begin{definition}[Admissible Trading Strategies] \label{admissibledefinition}
The only strategies considered will be {\it admissible} strategies, meaning: 
	\begin{enumerate}
	\item $\pi_{t}$ is progressively measurable with respect to the natural filtration of our two-dimensional Brownian motion.
	\item $E  \displaystyle \int \limits_{0}^{T} \sigma_{s}^{2} \pi_{s}^{2} \,ds   < \infty$.
	\item Given an initial wealth $x \in (0,\infty)$, the discounted wealth process \eqref{dscw}  is strictly positive for all $t \in [0,T]$.  
	\item $E \left [\int \limits_{0}^{T} (X^{\pi}_{s})^{-2\gamma} \sigma_{s}^{2}\pi_{s}^{2}  \,ds \right ]  < \infty$, where $\gamma = 1$ under Case 1 of Assumption \ref{utilityassumptions}, and $\gamma := \max\{\alpha, \beta\} > 1$ under Case 2 of Assumption \ref{utilityassumptions}.
	\end{enumerate}
\end{definition}

Denoting the set of admissible trading strategies as $\mathcal{A}$, we define the value function, $J(t,x,y)$, as 
	\begin{equation} \label{valuefunction}
	J(t,x,y) := \esssup \limits_{\pi \in \mathcal{A}} E[ U_{T}(X^{\pi}_{T}) | X^{\pi}_{t} = x, Y_{t} = y ].
	\end{equation}
The value function $J$ is formally a solution to the Hamilton-Jacobi-Bellman (HJB) equation given by 
	\begin{equation} \label{hjb}
	\begin{cases}
	\begin{split}
	U_{t} &+ \max \limits_{\pi} \left (  \frac{1}{2} \sigma^{2}(y)\pi^{2}U_{xx} + \pi(\sigma(y) \lambda(y) U_{x} + \rho \sigma(y) a(y) U_{xy})  \right )\\& + \frac{1}{2}a^{2}(y)U_{yy} + b(y)U_{y} = 0, \qquad\qquad \text{for }(t,x,y)\in (0,T)\times (0,\infty)\times \mathbb R, \end{split}\\
	U(T,x,y) = U_{T}(x), \qquad\qquad \text{for }(x,y)\in (0,\infty)\times \mathbb R.
	\end{cases}
	\end{equation}
It is easy to see that the expression being maximized in \eqref{hjb} achieves its maximum at the portfolio given by 
	\begin{equation} \label{maxport}
	\pi(t,x,y) = \frac{-\lambda(y)U_{x}(t,x,y) - \rho a(y) U_{xy}(t,x,y)}{\sigma(y)U_{xx}(t,x,y)}.
	\end{equation} 
The optimal strategy is thus $\pi_{t}:= \pi(t,X^{\pi}_{t},Y_{t})$ for $t \in [0,T]$.  Substituting the maximizing portfolio \eqref{maxport} in equation \eqref{hjb} gives 
	\begin{equation} \label{hjb2}
	U_{t} - \frac{1}{2} \frac{(\lambda(y) U_{x} + \rho a(y) U_{xy})^{2}}{U_{xx}} + \frac{1}{2}a^{2}(y)U_{yy} + b(y)U_{y} = 0.
	\end{equation}

\section{Main theorem}\label{shorttimesection}
In this section, we introduce the main result of this paper, that the value function defined in \eqref{valuefunction} can be approximated in such a way as to yield an error measured in terms of time to horizon.  This result is achieved by constructing classical sub- and super-solutions to the HJB equation \eqref{hjb2} which have the form of a second order expansion in powers of $T-t$, the time to horizon.  The sub- and super-solutions will coincide up to the first order terms, and this will serve as the value function approximation.  The second order terms in the expansions of the sub- and super-solution will yield the error. A probabilistic argument using martingale inequalities will show that the value function lies between the constructed  sub- and super-solutions.

	\begin{theorem} \label{maintheorem}
	Let $J(t,x,y)$ be the value function defined in \eqref{valuefunction}, and let $U_{T}(x)$ denote the terminal utility function.  
	Define 
	\begin{equation} \label{valueapproximation}
	\Uh(t,x,y) := U_{T}(x) - (T-t) \frac{\lambda^{2}(y)}{2} \frac{U_{T}'(x)^{2}}{U_{T}''(x)}.
	\end{equation}
Suppose Assumptions \ref{modelassumptions} and \ref{utilityassumptions} hold .Then there exists constants $\C2 > 0$ and $0 < \delta < \min\{1, T\}$ such that 
	\begin{equation}\label{mainresultinequality}
	\left|J(t,x,y) - \Uh(t,x,y) \right| \leq  \C2 (T-t)^{2} h(x), \qquad \text{ for } (t,x,y) \in (T-\delta,T)\times (0,\infty) \times \Bbb R,
	\end{equation}
where $h(x) \equiv 1$ under Case 1 of Assumption \ref{utilityassumptions},
 and $h(x) = x^{1-\alpha} + x^{1-\beta}$ under Case 2 of Assumption \ref{utilityassumptions}; the constants $\C2$ and $\delta$ are independent of $t,x$ and $y$.
	\end{theorem}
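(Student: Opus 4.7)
The plan is to follow the program sketched just before the theorem: build a classical super-solution $\hat U^+$ and a classical sub-solution $\hat U^-$ of the HJB equation~\eqref{hjb2} that coincide with $\hat U$ at $t=T$, differ from $\hat U$ by a term of order $(T-t)^2 h(x)$, and then squeeze $J$ between them by comparison along admissible portfolios. The precise form of $\hat U$ is forced by the formal expansion $J(t,x,y)\sim U_T(x)+(T-t)u^{(1)}(x,y)+O((T-t)^2)$: substituting into~\eqref{hjb2} and killing the $O(1)$ coefficient in $T-t$ yields $u^{(1)}(x,y)=-\tfrac12\lambda^2(y)U_T'(x)^2/U_T''(x)$, which is exactly~\eqref{valueapproximation}.

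Write $\mathcal N[U]:=U_t-\tfrac12(\lambda U_x+\rho a U_{xy})^2/U_{xx}+\tfrac12 a^2 U_{yy}+bU_y$ for the operator in~\eqref{hjb2}. The construction of $\hat U$ forces the $(T-t)^0$ coefficient of $\mathcal N[\hat U]$ to vanish, leaving $\mathcal N[\hat U]=(T-t)R_1(x,y)+O((T-t)^2)\tilde R_2(x,y)$, where $R_1$ and $\tilde R_2$ are explicit combinations of $\lambda$, $a$, $b$, and derivatives of $U_T$ up to order four. Assumption~\ref{modelassumptions} controls the model coefficients, while the ratio conditions~\eqref{u(x)}--\eqref{uprime2(x)} let me dominate each derivative combination of $U_T$ by the corresponding combination of $M$, which reduces to a multiple of $h(x)$ by direct computation; this yields $|R_1(x,y)|+|\tilde R_2(x,y)|\le C h(x)$ uniformly in $y$. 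I then set
\[
\hat U^{\pm}(t,x,y):=\hat U(t,x,y)\pm K(T-t)^2 h(x),
\]
so that $(\hat U^\pm)_t=\hat U_t\mp 2K(T-t)h(x)$ while the $x$- and $y$-derivatives of $\hat U^\pm$ differ from those of $\hat U$ only by $O((T-t)^2)$. Substitution gives $\mathcal N[\hat U^\pm]=\mathcal N[\hat U]\mp 2K(T-t)h(x)+O((T-t)^2)h(x)$; choosing $K$ larger than the constant that bounds $|R_1|/h$ and then $T-t<\delta$ for $\delta$ small enough makes $\mathcal N[\hat U^+]\le 0$ and $\mathcal N[\hat U^-]\ge 0$. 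The same smallness of $\delta$ also preserves strict concavity $(\hat U^\pm)_{xx}<0$, so the pointwise maximization in~\eqref{hjb} is well-posed and the reduction from~\eqref{hjb} to~\eqref{hjb2} remains valid for $\hat U^\pm$.

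The final step is a martingale sandwich. For the upper bound, fix any admissible $\pi\in\mathcal A$ and apply It\^o's formula to $\hat U^+(s,X^\pi_s,Y_s)$; because the pointwise max over portfolios in~\eqref{hjb} dominates the integrand at the particular $\pi$ being used, the super-solution inequality yields $(\hat U^+)_t+\mathcal L^\pi\hat U^+\le 0$ along the trajectory, so $\hat U^+(s,X^\pi_s,Y_s)$ is a local supermartingale. Using the integrability built into Definition~\ref{admissibledefinition} together with the $h(x)$-growth of $\hat U^\pm$ and its derivatives, I upgrade to a true supermartingale to obtain $E[U_T(X^\pi_T)\mid\mathcal F_t]\le\hat U^+(t,x,y)$, and taking essential supremum over $\pi$ gives $J\le\hat U^+$. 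For the lower bound, define $\pi^\ast$ via formula~\eqref{maxport} applied to $\hat U^-$: at $\pi^\ast$ the sub-solution inequality is saturated, and after checking $\pi^\ast\in\mathcal A$, the process $\hat U^-(s,X^{\pi^\ast}_s,Y_s)$ is a true submartingale, yielding $\hat U^-\le E[U_T(X^{\pi^\ast}_T)\mid\mathcal F_t]\le J$. Combining these bounds produces $|J-\hat U|\le K(T-t)^2 h(x)$ on $(T-\delta,T)\times(0,\infty)\times\mathbb R$, which is~\eqref{mainresultinequality} with $\C2=K$.

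The hard part is the bookkeeping: verifying the $h(x)$-bound on the HJB residual, verifying the admissibility of $\pi^\ast$, and upgrading local (super/sub)martingales into genuine ones. Every key estimate reduces to controlling nested ratios of $U_T$-derivatives such as $(U_T')^2/U_T''$, $U_T'U_T^{(3)}/(U_T'')^2$, and higher, by the corresponding ratios of $M$, and then showing each such combination is bounded by a constant multiple of $h(x)$; Case~1 collapses to the bounded setting $h\equiv 1$, while Case~2 forces $h(x)=x^{1-\alpha}+x^{1-\beta}$. The true-martingale upgrade rests on condition~(4) of Definition~\ref{admissibledefinition}, whose $\gamma$ is chosen precisely to match the $h(x)$-growth of the relevant integrands.
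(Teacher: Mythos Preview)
Your proposal is correct and follows essentially the same route as the paper: construct sub- and super-solutions $\hat U^{\pm}=\hat U\pm K(T-t)^2 h(x)$ by controlling the HJB residual via Assumptions~\ref{modelassumptions} and~\ref{utilityassumptions}, then sandwich $J$ between them using It\^o's formula along an arbitrary admissible $\pi$ (upper bound) and along the feedback portfolio generated by the sub-solution (lower bound). The paper executes the ``upgrade'' step by localizing with stopping times and invoking dominated convergence (its Lemma~\ref{boundedintfunc}) rather than by directly verifying true (super/sub)martingality, and it handles the denominator $U_{xx}$ by clearing fractions, but these are presentational differences only.
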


	\begin{proof}
	We prove this result in two parts: first, we will construct the classical sub- and super-solutions $\ul{U} (t,x,y)$ and $\ol{U}(t,x,y)$, respectively, to the HJB equation.  Once established, we will then show that the value function lies between the sub- and super-solutions, i.e., $\ul{U}(t,x,y) \leq J(t,x,y) \leq \ol{U}(t,x,y)$.

We begin by constructing $\ul{U}$ and $\ol{U}$.  Consider the HJB equation given in \eqref{hjb2}:
	\begin{equation} \label{shorthjb}
	\begin{cases}
	\begin{split}
	U_{t} + \H(U) = 0, \qquad\qquad \text{for }(t,x,y)\in (0,T)\times (0,\infty)\times \mathbb R, \end{split}\\
	U(T,x,y) = U_{T}(x), \qquad\qquad \text{for }(x,y)\in (0,\infty)\times \mathbb R,
	\end{cases}
	\end{equation} where
	\begin{equation*}
		\begin{split}
		\H(U) := H(y,U,U_{x},U_{y},U_{xx},U_{xy},U_{yy}) &:= - \frac{1}{2} \frac{(\lambda(y) U_{x} + \rho a(y) U_{xy})^{2}}{U_{xx}} + \frac{1}{2}a^{2}(y)U_{yy} + b(y)U_{y}.
		\end{split}
	\end{equation*}
We consider sub- and super-solutions  having the following expansion in terms of powers of $T-t$: 
	\begin{equation*}
	U(t,x,y) := \U0(x,y) + (T-t) \U1(x,y) + (T-t)^{2} \U2(x,y).
	\end{equation*}  
So that our first order approximation coincides with the value function at the terminal time $T$, we choose the terminal condition of \eqref{shorthjb} for the value of $\U0$, i.e.,
	\begin{equation*}
	\U0(x,y) := U_{T}(x) \text{ for all } (x,y) \in (0,\infty) \times \Bbb R.
	\end{equation*}
We now substitute $U$ into \eqref{shorthjb} to obtain:
	\begin{equation}\label{expansion-PDE}\begin{split}
	&-\U1 - 2(T-t) \U2 \\ 
	&-\frac{1}{2} \frac{\left(\lambda(y) (\U0_{x} + (T-t) \U1_{x} + (T-t)^{2} \U2_{x}) + \rho a(y) (\U0_{xy} + (T-t) \U1_{xy} + (T-t)^{2} \U2_{xy}) \right)^{2}}{\U0_{xx} + (T-t) \U1_{xx} + (T-t) \U2_{xx}} \\
	&	 + \frac{1}{2} a^{2}(y) \left(\U0_{yy} + (T-t) \U1_{yy} + (T-t) \U2_{yy} \right) + b(y) \left(\U0_{y} + (T-t) \U1_{y} + (T-t) \U2_{y} \right)  = 0. 
	\end{split}
	\end{equation}
	We choose $\U1$ such that terms of order $O(1)$ on the left-hand side of equation \eqref{expansion-PDE} disappear. To do this, we clear fractions and collect terms of order $O(1)$.  Equating these with zero and then removing any terms equivalent to zero (i.e., terms containing partial derivatives of $\U0$ in the variables $t$ and $y$) yields the following formula for $\U1$: 
	\begin{equation*}
	\U1(x,y) =  -\frac{\lambda^{2}(y)}{2} \frac{U_{T}'(x)^{2}}{U_{T}''(x)}  \text{ for all } (x,y) \in (0,\infty) \times \Bbb R.
	\end{equation*}
Finally, we choose two different functions  for $\U2$ so as to obtain a sub- and super-solution.  This is done by analyzing the formula for $\U2$ which results from equating the coefficients of the $T-t$ terms in \eqref{expansion-PDE}, after clearing fractions, to zero. 
Removing any terms with factors of zero, as before, yields
	\begin{equation}\label{U2formula} 
		\begin{split}
	&\U2(x,y) = \\  &\frac{U_{T}'(x)^{2}}{U_{T}''(x)}\left( \frac{1}{4}\lambda^{4}(y) - \frac{1}{2}b(y) \lambda(y) \lambda'(y) +  \rho a(y) \lambda^{2}(y) \lambda'(y) - \frac{1}{4} a^{2}(y) (\lambda'(y))^{2} - \frac{1}{4} a^{2}(y) \lambda(y) \lambda''(y)  \right) \\ &- \frac{U_{T}'(x)^{3}}{U_{T}''(x)^{3}} \left( \frac{1}{2}\rho a(y) \lambda^{2}(y) \lambda'(y) U_{T}^{(3)}(x) + \frac{\lambda^{4}(y)}{4}\frac{U_{T}'(x)(U_{T}^{(3)}(x))^{2}}{(U_{T}''(x))^{2}} - \frac{\lambda^{4}(y)}{8}\frac{U_{T}'(x)U_{T}^{(4)}(x)}{U_{T}''(x)} \right).
		\end{split}
	\end{equation}
We abbreviate this expression by expanding the terms on the right hand side of \eqref{U2formula} and enumerating the resulting terms as $a_{1}, \dots, a_{8}$, giving \eqref{U2formula} as 
	\begin{equation*}
	\U2(x,y) = \sum \limits_{i = 1}^{8} a_{i}(x,y).
	\end{equation*}
We note that $a_{i} \sim h(x)$, where $h(x) \equiv 1$ if $M(x) = \log(x)$ (i.e., if in Case 1 of Assumption \ref{utilityassumptions}), and $h(x) = x^{1-\alpha} + x^{1-\beta}$ if $M(x) = \frac{x^{1-\alpha}}{1-\alpha}+ \frac{x^{1-\beta}}{1-\beta}$ (i.e., if in Case 2 of Assumption \ref{utilityassumptions}) and bounded in $y$ for $1 \leq i \leq 8$. Set \[\C2 := \left (8\max \limits_{1 \leq i \leq 8} \sup \limits_{ \substack{ x > 0 \\ y \in \Bbb R} } \frac{|a_{i}|}{h(x)} \right ) + 1\] and define $\ol{u}_{2} := \C2 h(x)$ and $\ul{u}_{2} := - \ol{u}_{2}$.  Then substituting 
	\begin{equation}\label{subsolution}
	\ul{U} := \U0 + (T-t) \U1 + (T-t)^{2} \ul{u}_{2}
	\end{equation} 
(and similarly 
	\begin{equation}\label{supersolution}
	\ol{U} := \U0 + (T-t) \U1 + (T-t)^{2} \ol{u}_{2})
	\end{equation} 
in the left-hand side of equation \eqref{shorthjb} and clearing fractions, terms of $O(1)$ disappear, while the terms that comprise the coefficient $T-t$ are strictly positive (respectively, strictly negative).  

We now observe that the following inequality holds: 
	\begin{equation} \label{orderhjbnofrac}
	\ul{U}_{xx}^{2} | \ul{U}_{t} + \H(\ul{U})| \leq c(T-t) \tilde{h}(x),
	\end{equation} 
where $\tilde{h}(x) = x^{-4}$ under Case 1 of Assumption \ref{utilityassumptions}, and $\tilde{h}(x) = (x^{1-\alpha} + x^{1-\beta})(x^{-2 - 2\alpha} + x^{-2 -2\beta})$ under Case 2 of Assumption \ref{utilityassumptions}.  We use Assumption \ref{utilityassumptions} to verify \eqref{orderhjbnofrac}, and we note that \eqref{orderhjbnofrac} holds for $\ol{U}$ in place of $\ul{U}$. Also note that $\frac{1}{c} x^{-2} \leq \ul{U}_{xx} \leq cx^{-2}< 0$ in Case 1 and $\frac{1}{c}(x^{-1 - \alpha} + x^{-1-\beta}) \leq \ul{U}_{xx} \leq c (x^{-1 - \alpha} + x^{-1-\beta}) < 0$ in Case 2 of Assumption \ref{utilityassumptions}, for some $c < 0$. So $(\ul{U}_{xx})^2$ is bounded away from $0$. 

To argue that $\ul{U}$ is a sub-solution to \eqref{shorthjb}, we recall that the coefficient of $T-t$ in the expression $\ul{U}_{xx}^{2}(\ul{U}_{t} + \H(\ul{U}))$ is strictly positive (by choice of $\ul{u}_{2}$).  Inequality \eqref{orderhjbnofrac} implies this coefficient has growth in $x$ on the order of $\tilde{h}(x)$ and growth in $y$ bounded.  \eqref{orderhjbnofrac}  also implies the $o(T-t)$ terms of $\ul{U}_{xx}^{2}(\ul{U}_{t} + \H(\ul{U}))$ also have growth in $x$ on the order of $\tilde{h}(x)$ and growth in $y$ bounded.  Thus, for $t$ near $T$, the positive coefficient of $T-t$ dominates the $o(T-t)$ terms uniformly in $x$ and $y$, implying $\ul{U}_{t} + \H(\ul{U})>0$, i.e., $\ul{U}$ is a classical sub-solution of \eqref{shorthjb}.  A mirror of this argument proves $\ol{U}$ is a classical super-solution of \eqref{shorthjb}.

It remains to be shown that the value function $J(t,x,y)$ given in $\eqref{valuefunction}$ lies between the sub- and super-solution, i.e., 
	\begin{equation*}
	\ul{U}(t,x,y) \leq J(t,x,y) \leq \ol{U}(t,x,y) \qquad \text{ for all } (t,x,y) \in [0,T] \times (0,\infty) \times \Bbb R.
	\end{equation*}  
We will first show that $\ul{U}(t,x,y) \leq J(t,x,y)$, and then we will show $J(t,x,y) \leq \ol{U}(t,x,y)$.
	To prove $\ul{U}(t,x,y) \leq J(t,x,y)$, we first consider the trading strategy $\ul{\pi}(t,X^{\ul{\pi}}_{t},Y_{t})$ generated by the sub-solution $\ul{U}$, where $\ul{\pi}(t,x,y)$ is the function obtained by substituting $\ul{U}$ into \eqref{maxport}, i.e., 
	\begin{equation*}
		\ul{\pi}(t,x,y) = \frac{-\lambda(y)\ul{U}_{x}(t,x,y) - \rho a(y) \ul{U}_{xy}(t,x,y)}{\sigma(y)\ul{U}_{xx}(t,x,y)}.
	\end{equation*}
Applying Ito's formula to $\ul{U}(t,X^{\ul{\pi}}_{t},Y_{t})$ gives
	\begin{equation} \label{Itosubsolution}
		\begin{split}
		\underbrace{\ul{U}(T, X^{\ul{\pi}}_{T}, Y_{T})}_{U_{T}(X^{\ul{\pi}}_{T})} - \ul{U}(t, X^{\ul{\pi}}_{t}, Y_{t}) =& \int \limits_{t}^{T} \left ( \ul{U}_{t} +  \sigma \ul{\pi} \lambda \ul{U}_{x} +  b\ul{U}_{y} + \frac{1}{2}  \sigma^{2} \ul{\pi}^{2} \ul{U}_{xx} +  \sigma \ul{\pi} a \rho \ul{U}_{xy} + \frac{1}{2}  a^{2} \ul{U}_{yy} \right ) \,ds \\
		+& \underbrace{\int \limits_{t}^{T} \left (  \sigma \ul{\pi}\ul{U}_{x} +  a \rho \ul{U}_{y} \right ) \, dW^{1}_{s} + \int \limits_{t}^{T}  a \sqrt{1 - \rho^{2}}\ul{U}_{y} \, dW^{2}_{s}}_{\text{local martingales}}.
		\end{split}
	\end{equation}
Since the stochastic integrals on the right hand side are local martingales, we can find a sequence $\{ \tau_{n} \}_{n = 1}^{\infty}$ of stopping times such that $\tau_{n} \in [t,T]$, $\tau_{n} \leq \tau_{n + 1}$ a.s. for all $n$, and $\tau_{n} \to T$ a.s. as $n \to \infty$.  In particular, if we replace $T$ with $T \wedge \tau_{n}$, the local martingales will become martingales:
	\begin{equation}\label{localizedIto}
		\begin{split}
		&\ul{U}(T\wedge \tau_{n}, X^{\ul{\pi}}_{T \wedge \tau_{n}}, Y_{T\wedge \tau_{n}}) - \ul{U}(t, X^{\ul{\pi}}_{t}, Y_{t}) \\
		&=\int \limits_{t }^{T\wedge \tau_{n}} \left ( \ul{U}_{t} +  \sigma \ul{\pi} \lambda \ul{U}_{x} +  b\ul{U}_{y} + \frac{1}{2}  \sigma^{2} \ul{\pi}^{2} \ul{U}_{xx} +  \sigma \ul{\pi} a \rho \ul{U}_{xy} + \frac{1}{2}  a^{2} \ul{U}_{yy} \right )  \,ds \\& +  \int \limits_{t}^{T\wedge \tau_{n}} \left (  \sigma \ul{\pi}\ul{U}_{x} +  a \rho \ul{U}_{y} \right ) \, dW^{1}_{s} + \int \limits_{t}^{T\wedge \tau_{n}}  a \sqrt{1 - \rho^{2}}\ul{U}_{y} \, dW^{2}_{s}.
		\end{split}
	\end{equation}
Note that the integrand of the first term on the right hand side of \eqref{localizedIto} is the left-hand side of the  HJB equation with the sub-solution $\ul{U}$ substituted in, thus making the term non-negative.  Taking the conditional expectation of both sides of equation \eqref{localizedIto} we get  \[\ul{U}(t, x, y) \leq E[ \ul{U}(T\wedge \tau_{n}, X^{\ul{\pi}}_{T \wedge \tau_{n}}, Y_{T\wedge \tau_{n}}) | X^{\ul{\pi}}_{t} = x, Y_{t} = y].\]

Now, clearly, $\ul{U}(T \wedge \tau_{n}, X^{\ul{\pi}}_{T\wedge \tau_{n}}, Y_{T\wedge \tau_{n}}) \to \ul{U}(T, X^{\ul{\pi}}_{T}, Y_{T}) = U_{T}(X^{\ul{\pi}}_{T})$ a.s. as $n \to \infty$.  Also, we have
	\begin{equation}\label{dominating_subsol}
		\begin{split}
		&|\ul{U}(T \wedge \tau_{n}, X^{\ul{\pi}}_{T\wedge \tau_{n}}, Y_{T\wedge \tau_{n}})| \\
		&= \left |U_{T}(X^{\ul{\pi}}_{T\wedge \tau_{n}}) - (T-T \wedge \tau_{n}) \frac{\lambda^{2}(Y_{T\wedge \tau_{n}})}{2} \frac{U_{T}'(X^{\ul{\pi}}_{T\wedge \tau_{n}})^{2}}{U_{T}''(X^{\ul{\pi}}_{T\wedge \tau_{n}})}  - c_{2}(T\wedge\tau_{n} - t)^{2} h(X^{\ul{\pi}}_{T\wedge\tau_{n}})\right | \\
		&\leq \left |U_{T}(X^{\ul{\pi}}_{T\wedge \tau_{n}}) \right | + T \frac{\lambda^{2}(Y_{T\wedge \tau_{n}}))}{2} \left |\frac{U_{T}'(X^{\ul{\pi}}_{T\wedge \tau_{n}})^{2}}{U_{T}''(X^{\ul{\pi}}_{T\wedge \tau_{n}})} \right | + c_{2}Th(X^{\ul{\pi}}_{T \wedge \tau_{n}}) \\
		& \leq \C3 G(X^{\ul{\pi}}_{T\wedge \tau_{n}}),
		\end{split}
	\end{equation}
for some constant $\C3$, where $G(x) = \log(x) + 1$ under Case 1 of Assumption \ref{utilityassumptions}, and $G(x) = x^{1-\alpha} + x^{1-\beta}$ under Case 2 of Assumption \ref{utilityassumptions}.

To see that $\{G(X^{\ul{\pi}}_{T\wedge \tau_{n}})\}_{n =1}^{\infty}$ is dominated by an integrable random variable, we refer the reader to Lemma \ref{boundedintfunc} which is proved in the appendix.  Thus, we are in a position to apply the dominated convergence theorem, which yields  
	\begin{equation*}
	E[\ul{U}(T\wedge \tau_{n}, X^{\ul{\pi}}_{T \wedge \tau_{n}}, Y_{T\wedge \tau_{n}})| X^{\ul{\pi}}_{t} = x, Y_{t} = y ] \to E[ U_{T}(X^{\ul{\pi}}_{T}) | X^{\ul{\pi}}_{t} = x, Y_{t} = y ] \ a.s.
	\end{equation*}
	as $n \to \infty$.
This implies \[\ul{U}(t, x, y) \leq E[ U_{T}(X^{\ul{\pi}}_{T}) | X^{\ul{\pi}}_{t} = x, Y_{t} = y ].\]  From the admissibility of $\ul{\pi}(t,X^{\ul{\pi}}_{t},Y_{t})$ (which follows from a similar argument used to prove Lemma \ref{admissibilitypihat}), it immediately follows that $\ul{U}(t,x,y) \leq J(t,x,y)$.

We now verify that $J(t,x,y) \leq \ol{U}(t,x,y)$.  To begin, let $\tilde{\pi}$ be any admissible trading strategy.  Note that because $\ol{U}$ is a super-solution of the HJB equation \eqref{hjb}, we have that 
	\begin{equation*}
\ol{U}_{t} +   \frac{1}{2} \sigma^{2}(y)\tilde{\pi}^{2}\ol{U}_{xx} + \tilde{\pi}(\sigma(y) \lambda(y) \ol{U}_{x} + \rho \sigma(y) a(y) \ol{U}_{xy})  + \frac{1}{2}a^{2}(y)\ol{U}_{yy} + b(y)\ol{U}_{y} \leq 0.
	\end{equation*}
Thus, applying Ito's formula to $\ol{U}(t,X^{\tilde{\pi}}_{t},Y_{t})$, followed by localizing and taking conditional expectations, we have \[E[ \ol{U}(T\wedge \tau_{n}, X^{\tilde{\pi}}_{T \wedge \tau_{n}}, Y_{T\wedge \tau_{n}}) | X^{\tilde{\pi}}_{t} = x, Y_{t} = y] \leq \ol{U}(t, x, y) \text{ for each }n.\]  As in \eqref{dominating_subsol}, we can show that $|\ol{U}(T\wedge \tau_{n}, X^{\tilde{\pi}}_{T \wedge \tau_{n}}, Y_{T\wedge \tau_{n}})|\leq \C3 G(X^{\tilde{\pi}}_{T\wedge \tau_{n}})$ where $G(X^{\tilde{\pi}}_{T\wedge \tau_{n}})$ is dominated by an integrable random variable (shown in Lemma \ref{boundedintfunc}). The dominated convergence theorem then implies $E[ U_{T}(X^{\tilde{\pi}}_{T}) | X^{\tilde{\pi}}_{t} = x, Y_{t} = y ] \leq \ol{U}(t,x,y)$.  As $\tilde{\pi}_{t}$ is an arbitrary admissible portfolio, this implies that $J(t,x,y) \leq \ol{U}(t,x,y)$, as desired. 

	Having established that the value function lies between the sub- and super-solution, i.e., $\ul{U}(t,x,y) \leq J(t,x,y) \leq \ol{U}(t,x,y)$, it follows immediately from the definitions of $\ul{U}$ and $\ol{U}$ in \eqref{subsolution} and \eqref{supersolution}, respectively,  that for $\Uh(t,x,y)$ as in \eqref{valueapproximation}, $|J(t,x,y) - \Uh(t,x,y) | \leq c_{2}(T-t)^{2}h(x)$.
	\end{proof}

\section{Building the approximating portfolio} \label{approxportsec}
In section \ref{shorttimesection}, we showed that the value function $J(t,x,y)$ given in \eqref{valuefunction} can be approximated by a first order expansion in powers of the time to horizon $T-t$, namely, $\hat{U}(t,x,y)$ given in \eqref{valueapproximation}.  In addition, we showed that the error between $J(t,x,y)$ and $\hat{U}(t,x,y)$ is controlled by the square time to horizon $(T-t)^{2}$.  

In this section, we will show that our first order approximation generates a close-to-optimal trading strategy near horizon.  To show this, we first recall formula \eqref{maxport}, which the Verification Theorem tells us would represent the optimal trading strategy in the case that the HJB equation \eqref{hjb} were well-posed.  As we do not assume a classical solution to the HJB equation, the conclusion of the Verification Theorem may not be applied.  Formula \eqref{maxport} will still be useful in our analysis, however. We will show that our smooth approximation $\hat{U}(t,x,y)$, when substituted into \eqref{maxport}, produces a portfolio which yields an expected utility close to the maximum expected utility, with the error measured in terms of the square time to horizon $(T-t)^{2}$.  This result is stated in the following lemma.

	\begin{lemma} \label{utilityapproxlemma}
Let $J(t,x,y)$ be the value function defined in \eqref{valuefunction}, $\hat{U}(t,x,y)$ be as in \eqref{valueapproximation}, and let $X^{\pih}_{s}$ be the wealth process with evolution described by \eqref{dscw} under portfolio $\pih(s,X^{\pih}_{s},Y_{s})$, where $\pih(s,x,y)$ the function given by \begin{equation}\label{approxoptimalport}
		\pih(s,x,y) := -\frac{\lambda(y)}{\sigma(y)} \frac{\Uh_{x}(s,x,y)}{\Uh_{xx}(s,x,y)} - \frac{\rho a(y)}{\sigma(y)} \frac{\Uh_{xy}(s,x,y)}{\Uh_{xx}(s,x,y)}, \,\,\,\,\,\,\, s\in[t,T], \,\,x \in (0,\infty), \,\, y \in \Bbb R.
	\end{equation}
Under Assumptions \ref{modelassumptions} and \ref{utilityassumptions}, there exists a constant $C > 0$ and $0 < \delta < \min\{1,T\}$ such that
		\begin{equation*}
		|J(t,x,y) - E[ U_{T}(X^{\pih}_{T}) |X^{\pih}_{t} = x, Y_{t} = y ]| \leq C (T-t)^{2}h(x), \qquad \text{ for } (t,x,y) \in (T-\delta,T)\times (0,\infty) \times \Bbb R
		\end{equation*}
where $h(x) \equiv 1$ under Case 1 of Assumption \ref{utilityassumptions}, and $h(x) = x^{1-\alpha} + x^{1-\beta}$ under Case 2 of Assumption \ref{utilityassumptions}; the constants $C$ and $\delta$ are independent of $t$, $x$ and $y$.
	\end{lemma}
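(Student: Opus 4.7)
The plan is to split the error into the pieces
\[
J(t,x,y) - E[U_{T}(X^{\pih}_{T})\mid X^{\pih}_t=x, Y_t=y] = \bigl(J(t,x,y) - \Uh(t,x,y)\bigr) + \bigl(\Uh(t,x,y) - E[U_{T}(X^{\pih}_{T})\mid \cdot]\bigr).
\]
Theorem \ref{maintheorem} already bounds the first bracket by $\C2(T-t)^2 h(x)$, so the task reduces to bounding the second bracket by a constant multiple of $(T-t)^2 h(x)$.

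For the second bracket, I would first invoke the admissibility of $\pih$ (the alluded-to Lemma \ref{admissibilitypihat}) and apply It\^o's formula to $\Uh(s, X^{\pih}_s, Y_s)$ on $[t, T\wedge\tau_n]$ for localizing stopping times $\tau_n \uparrow T$, exactly as in \eqref{Itosubsolution}--\eqref{localizedIto}. By the explicit form of $\pih$ in \eqref{approxoptimalport}, which is the pointwise maximizer of the HJB Hamiltonian after substituting $\Uh$ for $U$, the drift of the It\^o expansion collapses to $\Uh_t + \H(\Uh)$. Taking conditional expectations eliminates the martingale terms, giving
\begin{equation*}
E\bigl[\Uh(T\wedge\tau_n, X^{\pih}_{T\wedge\tau_n}, Y_{T\wedge\tau_n}) \mid X^{\pih}_t=x, Y_t=y\bigr] - \Uh(t,x,y) = E\!\left[\int_t^{T\wedge\tau_n}(\Uh_t + \H(\Uh))(s, X^{\pih}_s, Y_s)\,ds \,\bigg|\, \cdot\right],
\end{equation*}
and a dominated-convergence argument mirroring \eqref{dominating_subsol} would let me pass $n \to \infty$ so that the left side tends to $E[U_T(X^{\pih}_T)\mid \cdot] - \Uh(t,x,y)$ (using $\Uh(T,x,y) = U_T(x)$).

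The key analytic input is then a pointwise bound of the form $|\Uh_t + \H(\Uh)|(s,x,y) \le C(T-s)\,h^\ast(x)$ on a window $(T-\delta, T)$ with an explicit growth function $h^\ast$. This is essentially already available from the proof of Theorem \ref{maintheorem}: since $\U1$ was chosen precisely to kill the $O(1)$ terms when $\Uh$ is substituted into \eqref{shorthjb} after clearing denominators, the residual $\Uh_{xx}^2(\Uh_t + \H(\Uh))$ is $O(T-s)$ with $x$-growth controlled by $\tilde h(x)$, as in \eqref{orderhjbnofrac}. Dividing by the lower bound on $\Uh_{xx}^2$ quoted right after \eqref{orderhjbnofrac} extracts the required $h^\ast$. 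Integrating and taking expectation then gives
\[
\bigl|\Uh(t,x,y) - E[U_T(X^{\pih}_T)\mid \cdot]\bigr| \le C\int_t^T (T-s)\, E[h^\ast(X^{\pih}_s)]\,ds,
\]
which yields the target $C''(T-t)^2 h(x)$ provided $E[h^\ast(X^{\pih}_s)] \le C' h(x)$ uniformly in $s \in [t,T]$.

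The main obstacle I anticipate is precisely this last uniform moment bound on the wealth process $X^{\pih}$. In Case 2 of Assumption \ref{utilityassumptions}, $h^\ast$ grows polynomially in $x$, and one must control such moments against the stochastic exponential governing $X^{\pih}$; this will likely rest on a Novikov-type argument exploiting the uniform boundedness of $\lambda$, $a$, and their derivatives from Assumption \ref{modelassumptions}, paralleling the moment estimates of Lemma \ref{boundedintfunc}. Once that estimate is available, summing the two error contributions yields the lemma with $C = \C2 + C''$ on a time window $(T-\delta, T)$ determined by the smaller of the two $\delta$'s appearing in Theorem \ref{maintheorem} and in the pointwise expansion bound.
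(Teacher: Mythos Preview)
Your proposal is correct and follows essentially the same route as the paper: split via the triangle inequality, apply It\^o to $\Uh(s,X^{\pih}_s,Y_s)$ so the drift becomes $\Uh_t+\H(\Uh)=O(T-s)\,h(X^{\pih}_s)$, localize and pass to the limit by dominated convergence, and then combine with Theorem~\ref{maintheorem}. The only cosmetic difference is that the paper draws the uniform moment bound $E[h(X^{\pih}_s)]\le C\,h(x)$ directly from the stochastic-exponential estimate \eqref{uniformbound} in the proof of Lemma~\ref{admissibilitypihat} rather than from Lemma~\ref{boundedintfunc}, but the underlying argument is the same as the one you sketch.
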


\begin{proof}
We begin by referring the reader to Lemma \ref{admissibilitypihat}, which is proven in the Appendix and asserts the strategy $\pih_{t}$ is indeed admissible as in Definition \ref{admissibledefinition}.  

We now prove that the expected utility of terminal wealth under $\pih_{t}$ is near the maximal expected utility.  To do this, we start by applying Ito's formula to $\Uh(s, X^{\pih}_{s}, Y_{s})$ and obtain
	\begin{equation} \label{ItoUhat}
		\begin{split}
		\Uh(T, X^{\pih}_{T}, Y_{T}) - \Uh(t, X^{\pih}_{t}, Y_{t}) =& \int \limits_{t}^{T} \left ( \Uh_{t} +  \sigma \pih \lambda \Uh_{x} +  b\Uh_{y} + \frac{1}{2}  \sigma^{2} \pih^{2} \Uh_{xx} +  \sigma \pih a \rho \Uh_{xy} + \frac{1}{2}  a^{2} \Uh_{yy} \right ) \,ds \\
		+& \int \limits_{t}^{T} \left (  \sigma \pih\Uh_{x} +  a \rho \Uh_{y} \right ) \, dW^{1}_{s} + \int \limits_{t}^{T}  a \sqrt{1 - \rho^{2}}\Uh_{y} \, dW^{2}_{s}.
		\end{split}
	\end{equation}
Recall that by definition of $\Uh$ and Assumption \ref{utilityassumptions}, we get $|\partial_{t} \Uh + \H(\Uh)| = O(T-s)O(h(X^{\pih}_{s}))$, which shows the integrand of the drift term is $O(T-s)O(h(X^{\pih}_{s}))$.  Parallel to the proof of Theorem \ref{maintheorem}, we use the sequence of stopping times $\{\tau_{n}\}_{n=1}^{\infty}$ to localize \eqref{ItoUhat}, converting the local martingale terms to martingales.  Taking the conditional expectation of both sides then yields
	\begin{equation*}
		\begin{split} 
		&E[\Uh(T \wedge \tau_{n}, X^{\pih}_{T \wedge \tau_{n}}, Y_{T\wedge \tau_{n}})| X^{\pih}_{t} = x, Y_{t} = y ] - \Uh(t, x,y) \\
		&=\int \limits_{t }^{T\wedge \tau_{n}} E \left [ O(T-s)O(h(X^{\pih}_{s}))  | X^{\pih}_{t} = x, Y_{t} = y  \right ] \,ds.
		\end{split}
	\end{equation*}

Using the uniform bound of $h(X^{\pih})$ which can be obtained from the proof of Lemma \ref{admissibilitypihat}, it follows that 
	\begin{equation} \label{stoppedbound}
	|E[\Uh(T \wedge \tau_{n}, X^{\pih}_{T \wedge \tau_{n}}, Y_{T\wedge \tau_{n}})| X^{\pi}_{t} = x, Y_{t} = y ] - \Uh(t, x, y)| \leq C_{1}(T-t)^{2}h(x),
	\end{equation}
for some constant $C_1>0$. 
Now, parallel to the proof of Theorem \ref{maintheorem}, we have $\Uh(T \wedge \tau_{n}, X^{\pih}_{T\wedge \tau_{n}}, Y_{T\wedge \tau_{n}}) \to \Uh(T, X^{\pih}_{T}, Y_{T}) = U_{T}(X^{\pih}_{T})$ a.s. as $n \to \infty$.  Also, we have $|\Uh(T \wedge \tau_{n}, X^{\pih}_{T\wedge \tau_{n}}, Y_{T\wedge \tau_{n}})|  \leq \C3 G(X^{\pih}_{T\wedge \tau_{n}})$ for some constant $\C3$, where $G(x) = \log(x) + 1$ under Case 1 of Assumption \ref{utilityassumptions}, and $G(x) = x^{1-\alpha} + x^{1-\beta}$ under Case 2 of Assumption \ref{utilityassumptions}.

By Lemma \ref{boundedintfunc}, the sequence $\{G(X^{\pih}_{T\wedge \tau_{n}})\}_{n =1}^{\infty}$ is dominated by an integrable function, implying by the dominated convergence theorem that 
	\begin{equation*}
	E[\Uh(T\wedge \tau_{n}, X^{\pih}_{T \wedge \tau_{n}}, Y_{T\wedge \tau_{n}})| X^{\pih}_{t} = x, Y_{t} = y ] \to E[ U_{T}(X^{\pih}_{T}) | X^{\pih}_{t} = x, Y_{t} = y ] \text{ a.s. as } n \to \infty.
	\end{equation*}
As a result, \eqref{stoppedbound} gives 
	\begin{equation} \label{notstoppedbound}
	|E[U_{T}(X^{\pih}_{T})| X^{\pi}_{t} = x, Y_{t} = y ] - \Uh(t, x, y)| \leq C_{1}(T-t)^{2}h(x).
	\end{equation}
By Theorem \ref{maintheorem} and inequality \eqref{notstoppedbound}, it follows that  
	\begin{equation*}
	\begin{split}
	|E[U_{T}(X^{\pih}_{T})| X^{\pi}_{t} = x, Y_{t} = y ] - J(t, x, y)| &\leq |E[U_{T}(X^{\pih}_{T})| X^{\pi}_{t} = x, Y_{t} = y ] - \Uh(t, x, y)| + |\Uh(t, x,y) - J(t, x, y)| \\
	&\leq C_{1}(T-t)^{2} h(x) + c_{2} (T-t)^{2} h(x) \\
	&\leq C(T-t)^{2}h(x),
	\end{split}
	\end{equation*}
for $0 < T-t < \delta$.
\end{proof}

\section{Example} \label{smalltimeexamplesec}
We consider the following stochastic volatility model, which was used in \cite{Fouque2015Portfolio}, with parameter estimations taken from \cite{ChackoViceira05Dynamic}.  The time horizon considered is $[0,T]$.  The risky asset satisfies \eqref{stockprice} with $\mu(y) = \mu$ a constant function and $\sigma(y) = \frac{1}{\sqrt{y}}$.  The stochastic factor satisfies \eqref{stochasticfactor} with $b(y) = (m - y)$ and $a(y) = \beta \sqrt{y}$ where $m$ and $\beta$ are constants.  In \cite{Fouque2015Portfolio}, the authors assume their model has a slow factor (hence the presence of a factor $\delta$ in their model).  As we do not assume the factor in our model is a slow factor, we have set $\delta = 1$.  We set: $\mu = 0.0811$, $m = 27.9345$, and $\beta = 1.12$; the variable $y$ is fixed at $27.9345$; $T=2$; the correlation coefficient between our two Brownian motions is $\rho = 0.5241$; and $\gamma = 3$.  Under this model, we consider the power utility function $U_{T}(x) = \frac{x^{(1-\gamma)}}{1-\gamma} = -\frac{1}{2x^{2}}$.  Note that while this utility function satisfies Assumption \ref{utilityassumptions}, not all of the model assumptions in Assumption \ref{modelassumptions} are satisfied (e.g., $\lambda(y)$ is not absolutely bounded).  Nevertheless, our results are shown in this section to be good approximations under this model as well.

The authors in \cite{Fouque2015Portfolio} obtained an explicit formula for the value function under the assumed model by solving a linear PDE derived in \cite{Zariphopoulou01Val}. We now restate the formula for the value function found in \cite{Fouque2015Portfolio}.  If $f(r) := \frac{ \beta^{2}}{2} r^{2} + (\frac{(1-\gamma) \beta \mu \rho - \gamma}{\gamma})r + \frac{(\gamma + (1-\gamma)\rho^{2})(1-\gamma)\mu^{2}}{2\gamma^{2}}$, where we substitute the above values we have assumed for the variables in $f(r)$, then solving $f(r) =0$ gives one positive root and one negative root of $f$, denoted $a_{+}$ and $a_{-}$, respectively.  In addition, we set $\alpha$ to be the square root of the discriminant of the quadratic polynomial $f(r)$.  Then, if we define $A(t,T) := \frac{(1 - e^{-\alpha (T-t)})a_{-}}{1- \frac{a_{-}}{a_{+}}e^{-\alpha(T-t)}}$ and $B(t,T) :=  m \left(  (T-t)a_{-} - \frac{2 }{ \beta^{2}} \log \left(\frac{1 - \frac{a_{-}}{a_{+}}e^{-\alpha(T-t)}}{1 - \frac{a_{-}}{a_{+}}} \right) \right )$, the value function is given by
	\begin{equation} \label{exvalueformula}
		U(t,x,y) = -\frac{1}{2x^{2}} e^{\left (\frac{\gamma}{\gamma + (1-\gamma) \rho^{2}} \right ) ( yA(t,T) + B(t,T)  ) }.
	\end{equation}

Recall that our approximation of the value function is given by
	\begin{equation} \label{exapproxformula}
		\hat{U}(t,x,y) = U_{T}(x) - (T-t) \frac{\lambda^{2}(y)}{2} \frac{U_{T}'(x)^{2}}{U_{T}''(x)}, 
	\end{equation}
We can now substitute \eqref{exvalueformula} and \eqref{exapproxformula} into \eqref{maxport} to obtain the optimal and approximating portfolios, $\pi_{U}$ and $\pih$, respectively.  For the parameter values assumed in the beginning of this section, we obtain the following formulas for the value function and its approximation, the optimal portfolio and its approximation, and the respective errors: 
\begin{table}[H]
\caption{\label{examplesectable}}
	\begin{tabularx}{47em}{ c  c  c  c  c  c  c  c } 
	\hline
	\parbox[c]{1em}{\vspace*{6mm}} $t$ & $T$ & $U(t,x,y)$ & $\Uh(t,x,y)$ & $|U - \Uh|$ & $\pi_{U}(t,x,y)$ & $\pih(t,x,y)$ & $|\pi_{U} - \pih|$ \\
	\hline
	\parbox[c]{1em}{\vspace*{10mm}} $1.5$ & $2$ & $\approx -\dfrac{0.485022}{x^{2}}$ & $\approx -\dfrac{0.484689}{x^{2}}$ & $\approx \dfrac{0.000333}{x^{2}}$ & $\approx 0.750482x$ & $\approx 0.748982x$ & $\approx 0.0015x $\\
\parbox[c]{1em}{\vspace*{10mm}} 1.9 & 2 & $\approx -\dfrac{0.496952}{x^{2}}$ & $\approx -\dfrac{0.496938}{x^{2}}$ & $\approx \dfrac{0.000014}{x^{2}}$ & $\approx 0.754024x$ & $\approx 0.753957x$ & $\approx 0.000067x$ \\
\hline
\end{tabularx}
\end{table}
In figures \ref{fig:valuefullonehalf}, \ref{fig:valuetimeonehalf}, and \ref{fig:valuetimepointone}, we graph the value function against the zero and first order approximations.  In figures \ref{fig:portfoliosonehalf} and \ref{fig:portfoliospointone}, we graph the optimal portfolio against our approximating portfolio.
	\begin{figure}[H]
	\begin{center}
		\begin{minipage}[t]{.915\linewidth}
		\includegraphics[scale=.7]{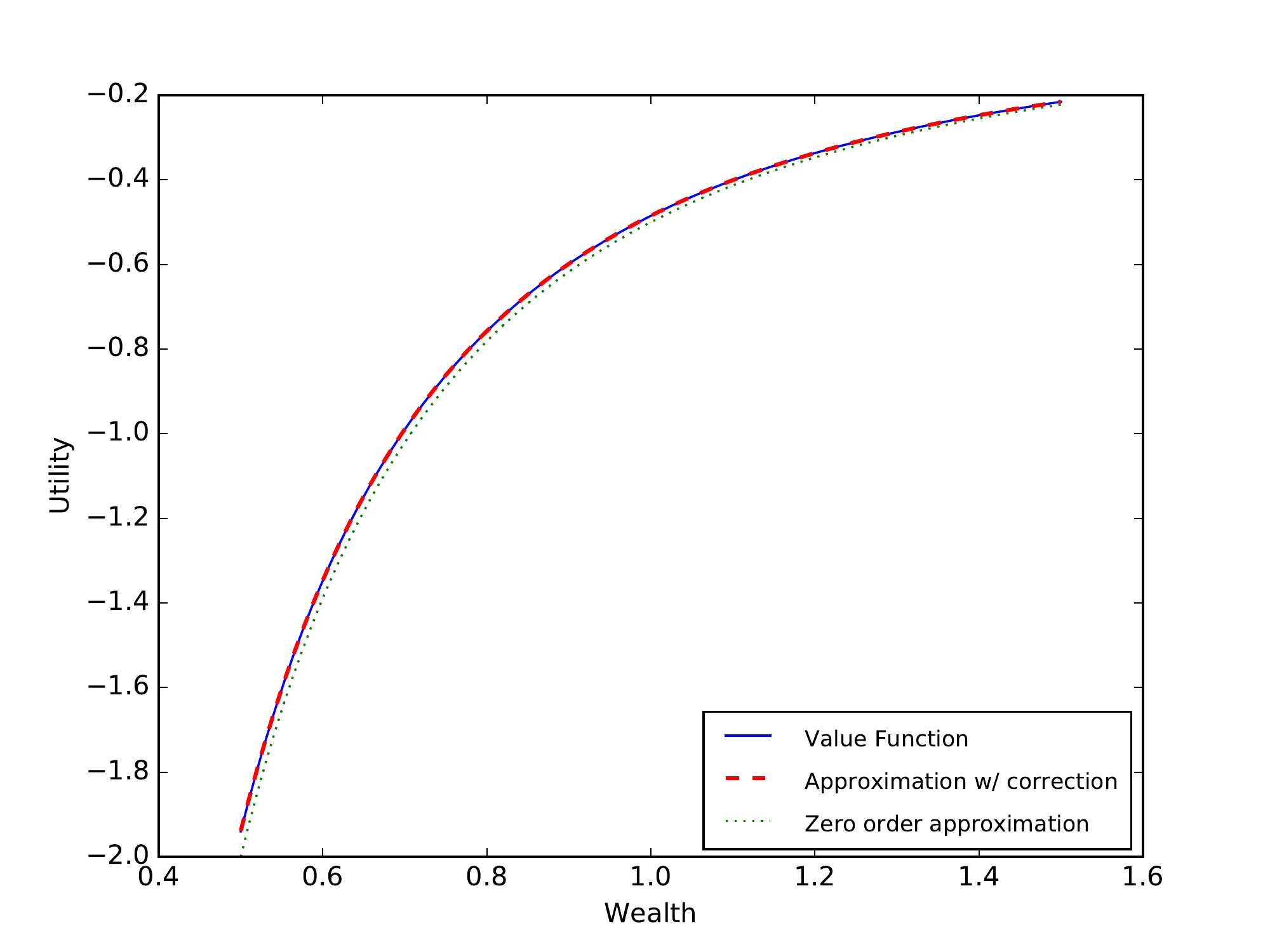} 
		\caption{($t = 1.5, \,T=2$) The value function is plotted against the zero order approximation, $U_{T}(x)$, and the zero order approximation with the additional correction term.  It is difficult to distinguish between the value function and the first order approximation (i.e.\ approximation with correction term).}
		\label{fig:valuefullonehalf}
		\end{minipage}
	\end{center}
	\end{figure}
	
	\begin{figure}[H]
	\centering
		\begin{minipage}[t]{.48\linewidth}
		\includegraphics[scale=0.43]{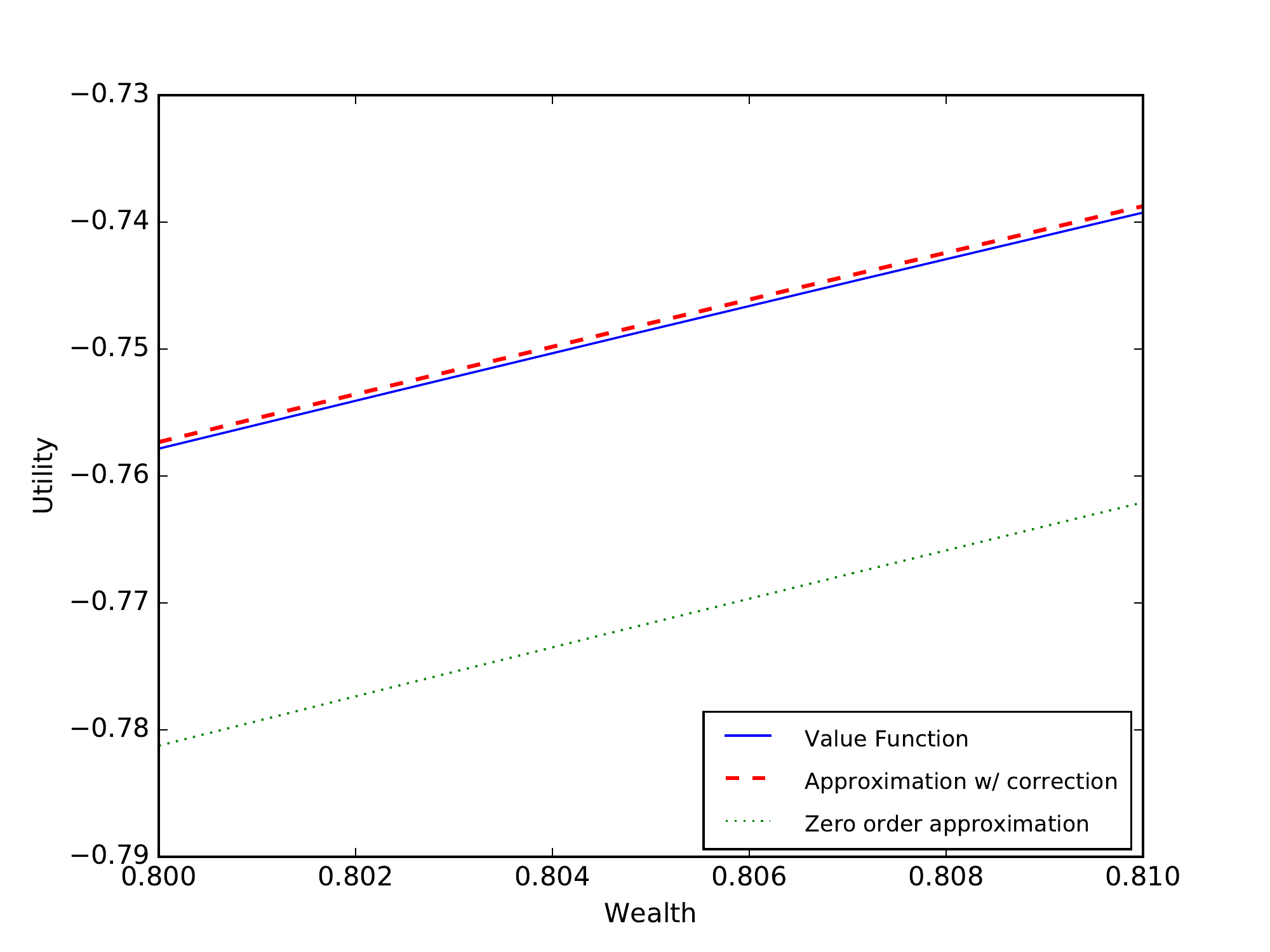}
		\caption{($t = 1.5, \,T=2$) When figure \ref{fig:valuefullonehalf} is zoomed in over a shorter wealth interval, differences between the value function and the approximations are more apparent. }
		\label{fig:valuetimeonehalf}
		\end{minipage}
		\hfill
		\begin{minipage}[t]{.48\linewidth}
		\includegraphics[scale=0.43]{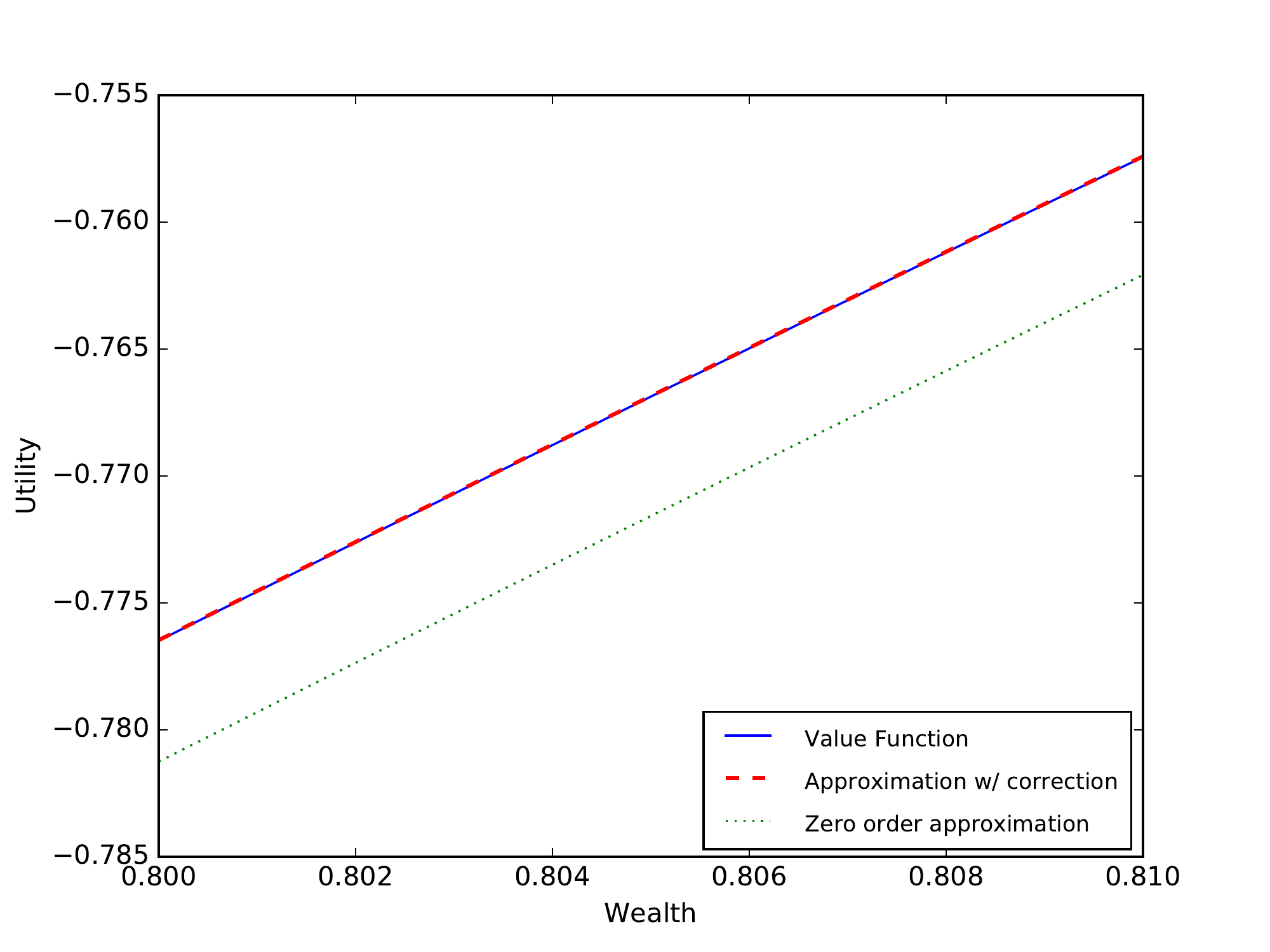}
		\caption{($t=1.9, \,T=2$) When the time interval is shortened from a length of $0.5$ to a length of $0.1$, the approximation with correction is much closer to the value function.}
		\label{fig:valuetimepointone}
		\end{minipage}
	\end{figure}
	\begin{figure}[H]
		\begin{minipage}[t]{.48\linewidth}
		\includegraphics[scale=0.43]{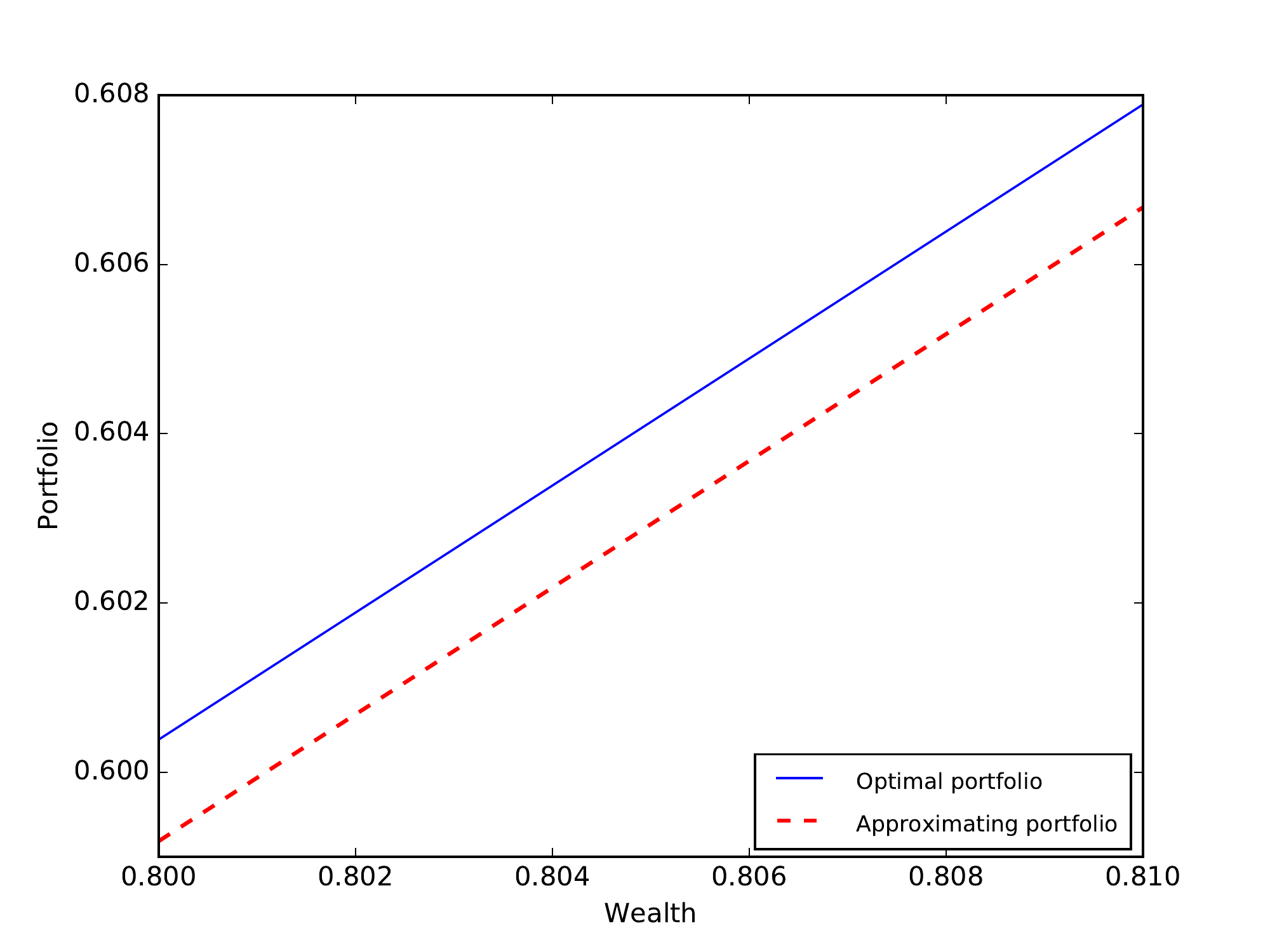}
		\caption{($t=1.5, \,T=2$) The portfolios generated by the value function and the approximation with correction are shown in this figure to be close.}
		\label{fig:portfoliosonehalf}
		\end{minipage}
		~~~
		\begin{minipage}[t]{.48\linewidth}
		\includegraphics[scale=0.43]{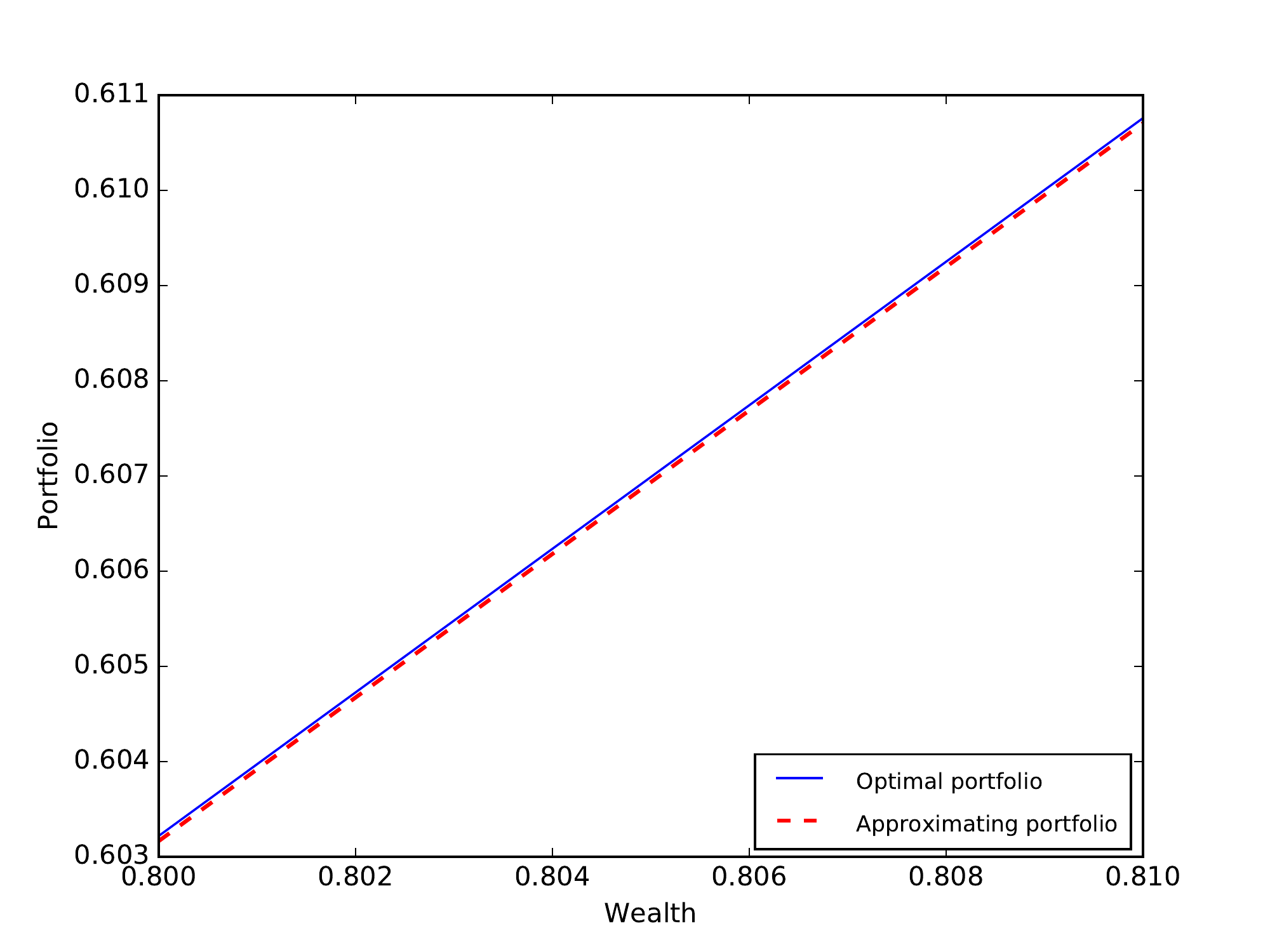}
		\caption{($t=1.9, \,T=2$) When the time interval is shortened from a length of $0.5$ to a length of $0.1$, the approximating portfolio is much closer to the optimal portfolio.}
		\label{fig:portfoliospointone}
		\end{minipage}
	\end{figure}

\section{Portfolio optimization on a finite time horizon}
\subsection{Approximation scheme} \label{schemesec}
In section \ref{shorttimesection}, we approximated the value function $J(t,x,y)$, given in \eqref{valuefunction}, for values of time $t$ near the terminal time $T$.  We then used this approximation in section \ref{approxportsec} to generate a trading strategy $\pih_{t} := \pih(t,X^{\pih}_{t}, Y_{s})$ (with the function $\pih(t,x,y)$ given in \eqref{approxoptimalport}) which was shown to be close-to-optimal when the time to horizon $T-t$ is small.  

In this section, we present a heuristic scheme to approximate the value function for all times $t$ in some finite horizon $[0,T]$, and then utilize this approximation in tandem with the function $\pi(t,x,y)$ from \eqref{maxport} to generate a close-to-optimal trading strategy on $[0,T]$.  To begin, we partition the interval $[0,T]$ into small sub-intervals, given by $\{0 = t_{0} < t_{1} < \dots < t_{n - 1} < t_{n} = T\}$. The scheme is then given by
	\begin{equation} \label{valueapproximationscheme}
	\begin{split}
\Uh(t,x,y) := \Uh(t_{k+1},x,y) + (t_{k+1}-t) \biggl [ - \frac{1}{2}\frac{(\lambda(y) \Uh_{x}(t_{k+1},x,y) + \rho a(y)  \Uh_{xy}(t_{k+1},x,y))^{2}}{\Uh_{xx}(t_{k+1},x,y)}    \\
+ \frac{1}{2} a^{2}(y) \Uh_{yy}(t_{k+1},x,y) + b(y)  \Uh_{y}(t_{k+1},x,y) \biggr],
	\end{split}
	\end{equation}
for $t_{k} \leq t \leq t_{k+1}$, $(x,y) \in (0,\infty) \times \Bbb R$, with $\Uh(T,x,y) = U_{T}(x)$.  The close-to-optimal trading strategy will then be given by $\pih_{t} := \pih(t,X^{\pih}_{t},Y_{t})$, where the function $\pih(t,x,y)$ is the function
	\begin{equation} \label{finitehorizonpi}
	\pih(t,x,y) := \frac{-\lambda(y)\Uh_{x}(t,x,y)}{\sigma(y)\Uh_{xx}(t,x,y)} - \frac{\rho a(y) \Uh_{xy}(t,x,y)}{\sigma(y)\Uh_{xx}(t,x,y)},
	\end{equation}
for $t_{k} \leq t < t_{k+1}$, $(x,y) \in (0,\infty) \times \Bbb R$, with $\Uh(T,x,y) = U_{T}(x)$.

A formal justification of these formulae is given as follows.  Approximating formula \eqref{valueapproximationscheme} is obtained by recursively implementing the technique for constructing the sub- and super-solutions to the HJB equation in section \ref{shorttimesection}.  We first apply the result in Theorem \ref{maintheorem} to the horizon $[t_{n-1},T]$.  We remind the reader that the zero order term in the approximation formula will be the terminal condition $U_{T}(x)$ on the interval $[t_{n-1},T]$.  For the earlier time interval, say, $[t_{n-2}, t_{n-1}]$, $\ul{U}(t_{n-1},x,y)$ and $\ol{U}(t_{n-1},x,y)$ will serve as the terminal conditions for the sub- and super-solutions constructed on the interval $[t_{n-2}, t_{n-1}]$.  Note the $y$-independence of the terminal condition $U_{T}(x)$ on the subinterval $[t_{n-1},T]$ reduces formula \eqref{valueapproximationscheme} to \eqref{valueapproximation}.  The dependence of the terminal conditions on $y$ at earlier time intervals introduces the additional terms in the first order term of \eqref{valueapproximationscheme}.

\begin{remark} The accuracy of approximations \eqref{valueapproximationscheme} and \eqref{finitehorizonpi} will be rigorously proved in future work.  This can be accomplished by repeating the procedure used to verify the accuracy in section \ref{shorttimesection}.  On a fixed, finite horizon, however, this will require a higher degree of regularity of the terminal condition.  In addition, establishing an inequality in the spirit of \eqref{orderhjbnofrac} will be more involved and is beyond the scope of this paper. \end{remark}

\subsection{Example}
In this section, we consider the model and utility function described in Section \ref{smalltimeexamplesec}, and graphically analyze the accuracy of approximation of our scheme \eqref{valueapproximationscheme} on the finite horizon $[0,T]$, where $T=2$.  In Section \ref{smalltimeexamplesec}, the value function was calculated at times $t =1.5$ and $t = 1.9$, which were close to T = 2.  However, following the approximation scheme in Section \ref{schemesec}, we can now approximate the value function at time $t = 0$.  

For comparison, we also compute a Merton approximation to the optimal portfolio. A naive Merton-like approximation for the value function is given by 
	\begin{equation*}\label{mertonscheme}
\Umer(x):= -e^{-0.0001569674298} \frac{1}{2x^{2}},
	\end{equation*}
where we have taken the process $Y_{t}$ to be fixed at the value $y=27.9345$ for all $t$.  This was obtained by solving the Merton HJB equation 
	\begin{equation*}
	\begin{cases}
	v_{t} - \frac{1}{2} \lambda^{2}(y) \frac{v_{x}^{2}}{v_{xx}} = 0 \\
	v(T,x) = -\frac{1}{2x^{2}},
	\end{cases}
	\end{equation*}
with $\lambda^{2}(y) = 0.0002354511446$ at $y = 27.9345$.  The corresponding Merton trading strategy is then given by
	\begin{equation*}
	\pi^{\text{Mer}}(x) = -\frac{\lambda(y)\Umer_{x}}{\sigma(y)\Umer_{xx}},
	\end{equation*}
which simplifies to 
	\begin{equation} \label{MertonPortfolio}
	 \pi^{\text{Mer}}(x) = 0.755162649999999x.
	 \end{equation}

We now compare, at time $t = 0$, our approximation to the optimal portfolio, given by \eqref{finitehorizonpi}, against the actual optimal portfolio $\pi_{U}(x) \approx 0.745029x$ (obtained by substituting \eqref{exvalueformula} into \eqref{maxport} and evaluating at $t=0$), and the Merton portfolio \eqref{MertonPortfolio} (see figures \ref{fig:schemeportsunrestricted} and \ref{fig:schemeportsrestricted}).  We also graph the value function \eqref{exvalueformula} against our approximation to the value function given by \eqref{valueapproximationscheme} (see figures \ref{fig:schemewealthunrestricted} and \ref{fig:schemewealthrestricted}). \setcounter{figure}{0}
	\begin{figure}[H]
		\begin{minipage}[t]{.48\linewidth}
		\begin{center} \includegraphics[scale=0.43]{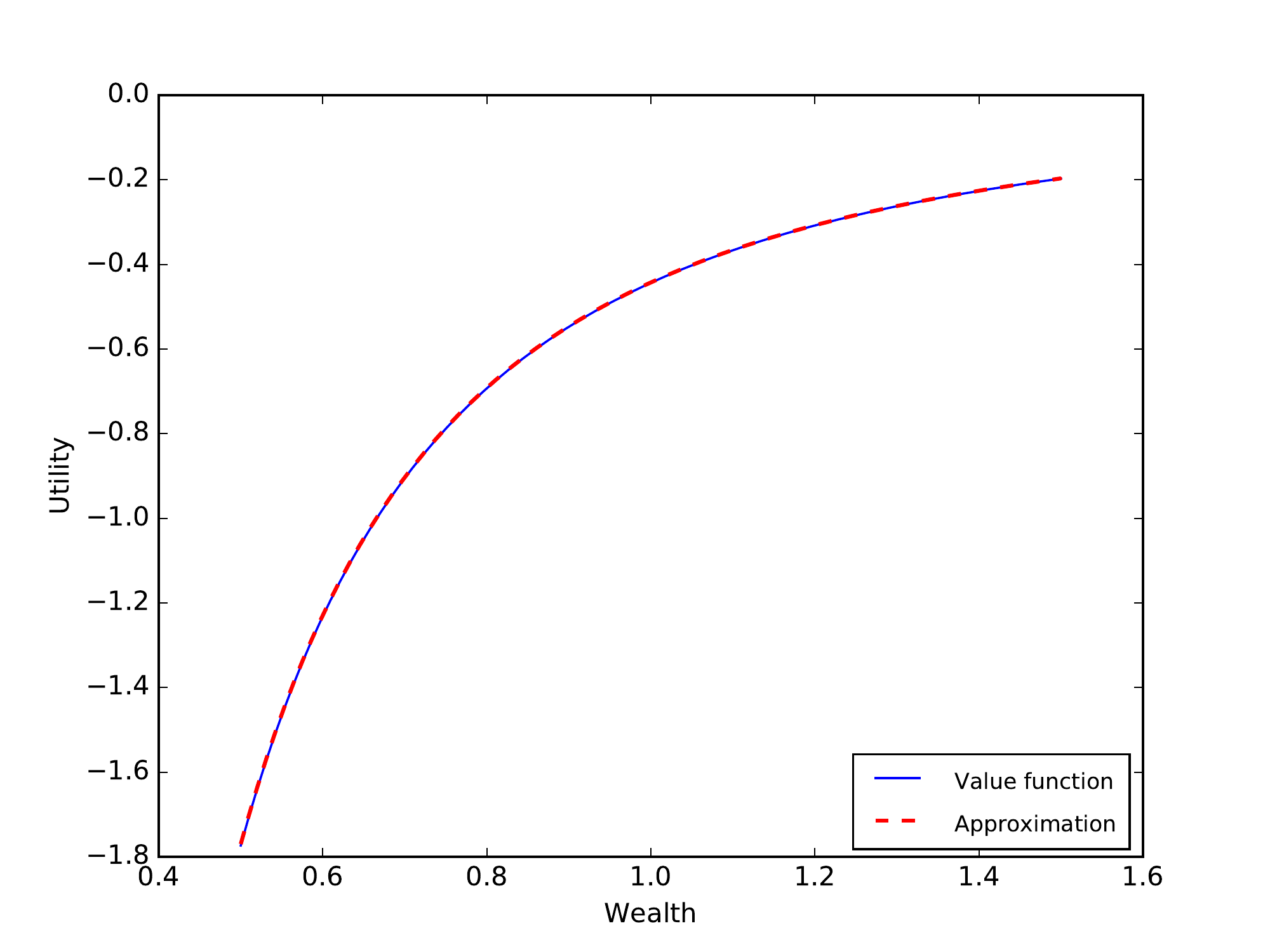} \end{center}
		\caption{($t = 0, \,T=2, \,n=4$) The value function plotted against the approximation obtained via the scheme described by \eqref{valueapproximationscheme}.}
		\label{fig:schemewealthunrestricted}
		\end{minipage}
\hfill
 		\begin{minipage}[t]{.48\linewidth}
		\begin{center} \includegraphics[scale=0.43]{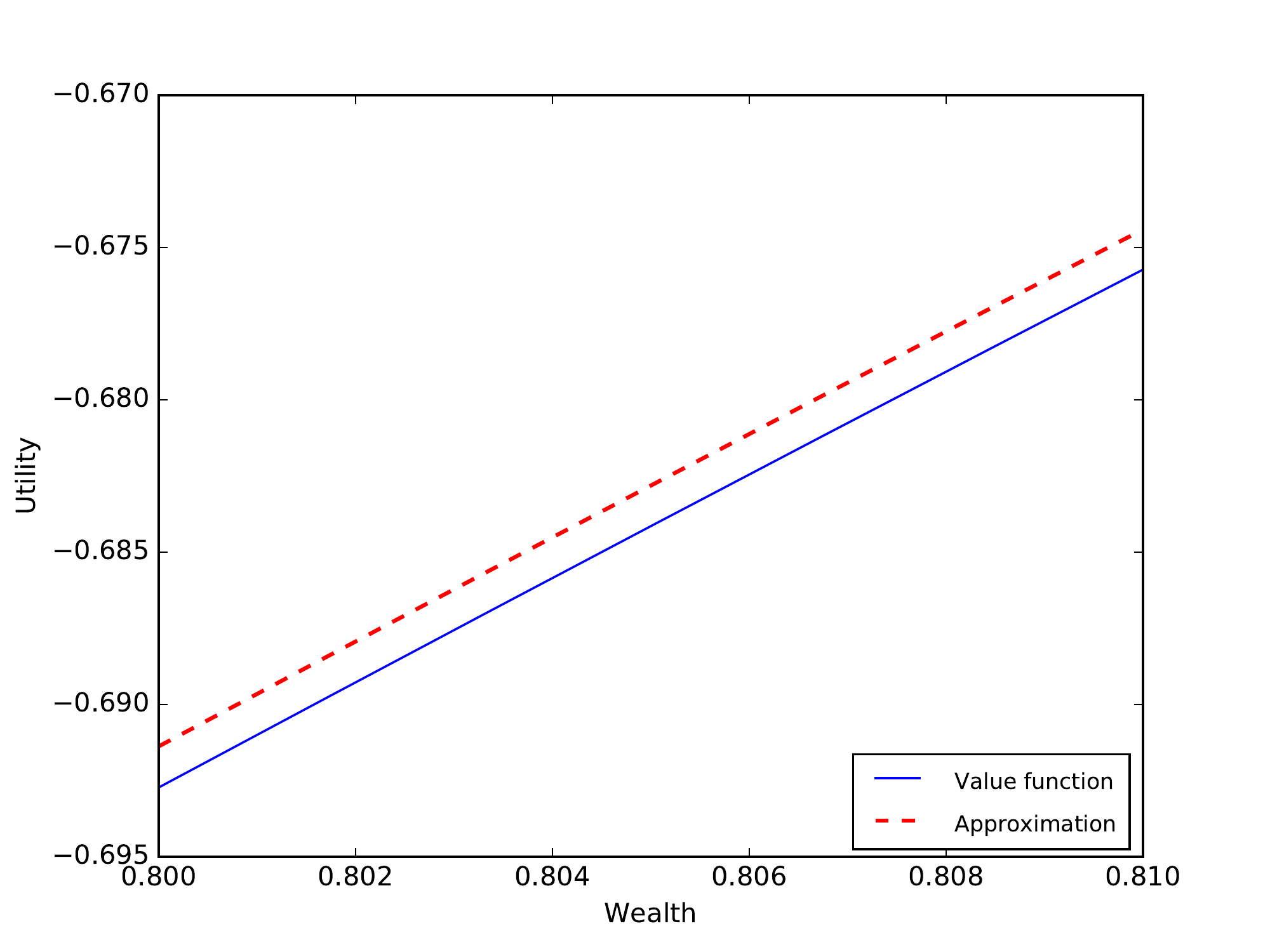} \end{center}
		\caption{($t=0, \,T=2, \,n=4$) When the wealth interval of figure \ref{fig:schemewealthunrestricted} is shortened. }
		\label{fig:schemewealthrestricted}
		\end{minipage}
	\end{figure}
	\begin{figure}[H]
		\begin{minipage}[t]{.48\textwidth}
		\begin{center} \includegraphics[scale=0.43]{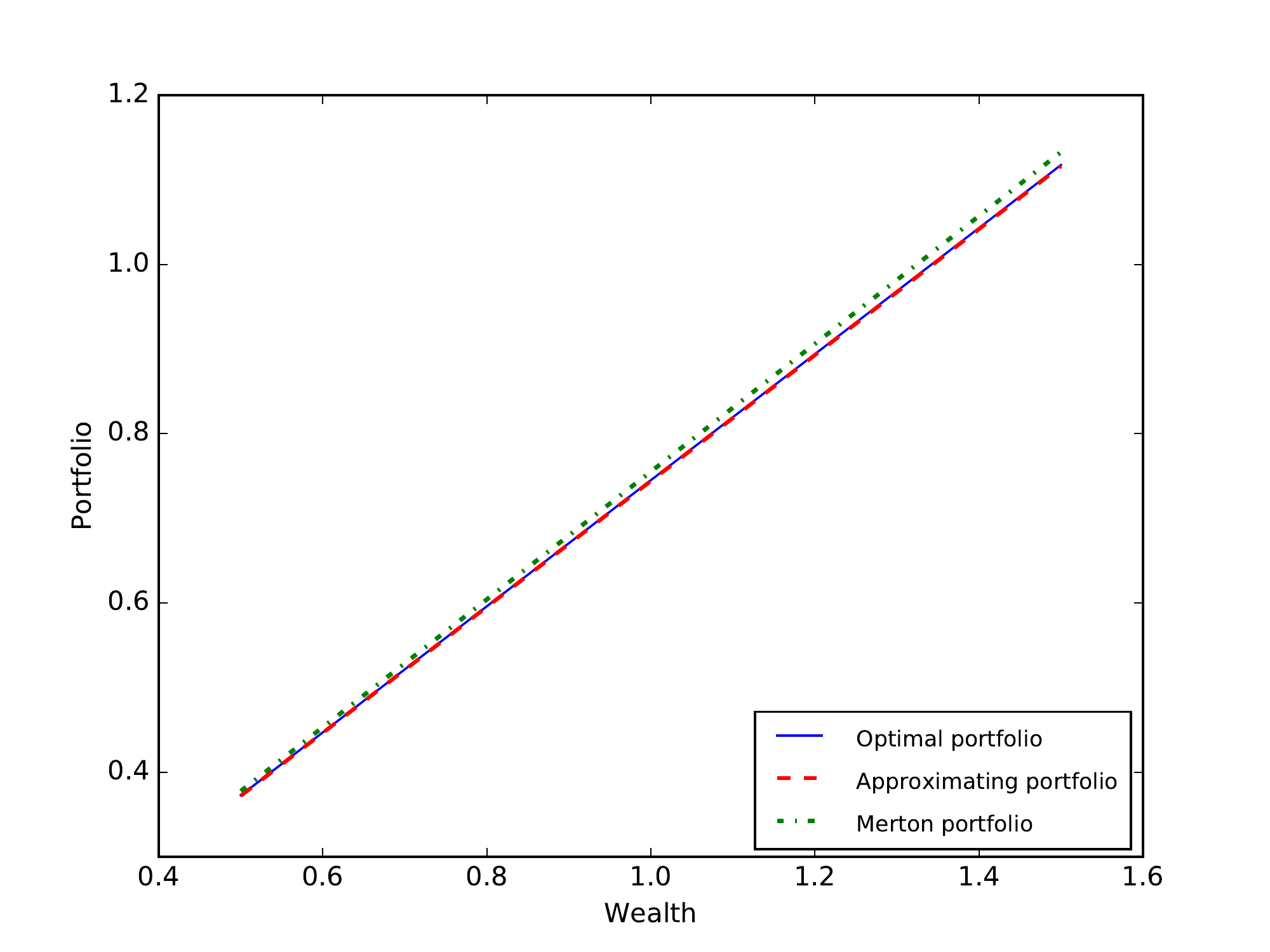} \end{center}
		\caption{($t=0, \,T=2, \,n=4$) The optimal portfolio plotted against the approximating portfolio generated by scheme \eqref{valueapproximationscheme}, and the Merton portfolio given in \eqref{MertonPortfolio}.}
		\label{fig:schemeportsunrestricted}
		\end{minipage}
\hfill
		\begin{minipage}[t]{.48\textwidth}
		\begin{center} \includegraphics[scale=0.43]{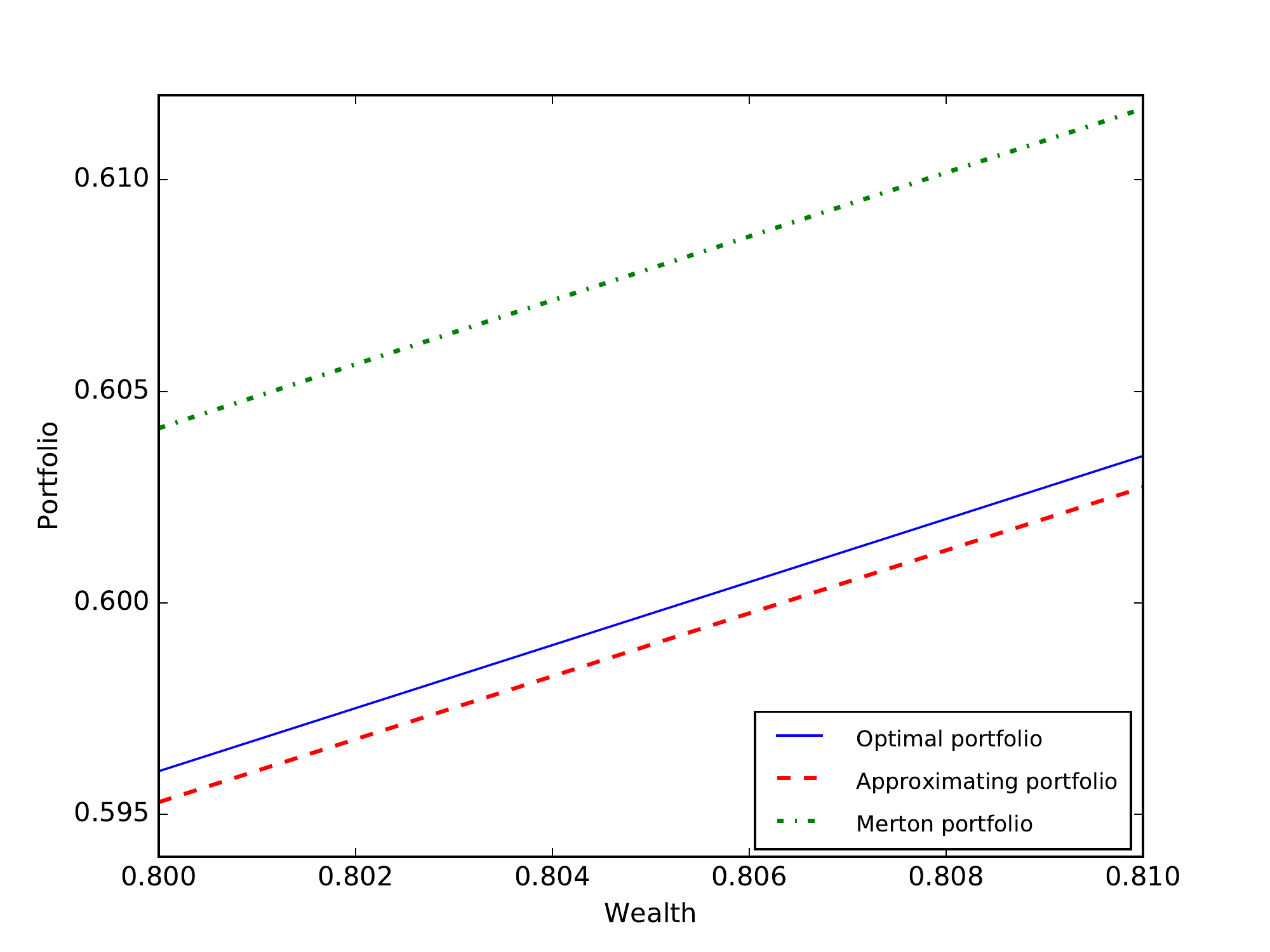} \end{center}
		\caption{($t=0, \,T=2, \,n=4$) When the wealth interval of figure \ref{fig:schemeportsunrestricted} is shortened, the accuracy of the approximation is more apparent.}
		\label{fig:schemeportsrestricted}
		\end{minipage}
	\end{figure}

\section{Appendix}

\subsection{Admissibility of $\pih$}
	\begin{lemma} \label{admissibilitypihat}
Let $X^{\pih}_s$ be the wealth process given by the SDE \eqref{dscw} under portfolio 
$\pih_{t} := \pih(t,X^{\pih}_{t},Y_{t})$ (with $\pih(t,x,y)$ as defined in \eqref{approxoptimalport}) and assume $X^{\pih}_{t} = x \in (0,\infty)$.  Under Assumptions \ref{modelassumptions} and \ref{utilityassumptions}, the strategy $\pih_{t}$ is admissible as defined in Definition \ref{admissibledefinition}.
	\end{lemma}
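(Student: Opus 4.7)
The plan is to exploit the explicit form of $\Uh$ in \eqref{valueapproximation} to show that $\pih(s,x,y)$ grows at most linearly in $x$ uniformly in $(s,y)$, and then to treat $X^{\pih}$ as a perturbation of geometric Brownian motion whose positivity and moments follow from an exponential representation of its SDE.

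The main technical step is to differentiate $\Uh$ explicitly, obtaining each of $\Uh_{x}$, $\Uh_{xx}$, $\Uh_{xy}$ as the sum of a leading utility-derivative term and a correction of order $T-s$ involving derivatives of $U_{T}'(x)^{2}/U_{T}''(x)$. Assumption \ref{utilityassumptions} implies that these auxiliary derivatives are of the same order in $x$ as the corresponding derivatives of $U_{T}$ itself; in Case 1 this amounts to saying $U_{T}'/U_{T}'' \sim -x$, while in Case 2 both $U_{T}'$ and $U_{T}''$ are sums of power functions of the same order as $M'$ and $M''$. Choosing $\delta>0$ small enough so that the $(T-s)$-correction does not destroy the sign or order of magnitude of $U_{T}''(x)$, I would obtain $\tfrac{1}{c}|U_{T}''(x)| \leq |\Uh_{xx}(s,x,y)| \leq c|U_{T}''(x)|$ uniformly in $(s,y)\in(T-\delta,T)\times\Bbb R$. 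Combining this with the uniform bounds on $\lambda$ and $a$ from Assumption \ref{modelassumptions}, and with the strict positivity of $\sigma$, I would deduce $|\pih(s,x,y)| \leq C x$ with $C$ independent of $(s,x,y)$, and moreover that $\pih$ is continuous in $(s,x,y)$.

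With the linear bound in hand, the SDE \eqref{dscw} for $X^{\pih}$ can be rewritten as $dX^{\pih}_{s} = X^{\pih}_{s}(\mu_{s}\,ds + \Sigma_{s}\,dW^{1}_{s})$ with progressively measurable and uniformly bounded $\mu_{s},\Sigma_{s}$. Strong existence and the exponential representation $X^{\pih}_{s} = x\exp\bigl(\int_{t}^{s}(\mu_{u}-\Sigma_{u}^{2}/2)\,du+\int_{t}^{s}\Sigma_{u}\,dW^{1}_{u}\bigr)$ give pathwise strict positivity (condition (3) of Definition \ref{admissibledefinition}) and progressive measurability of $\pih_{s}$ via its continuous dependence on the progressively measurable pair $(X^{\pih}_{s},Y_{s})$ (condition (1)). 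Bounded coefficients in the exponential yield $E[(X^{\pih}_{s})^{p}]\leq K(p,T)\,x^{p}$ for every $p\in\Bbb R$. Conditions (2) and (4) of Definition \ref{admissibledefinition} then follow from $E\int_{t}^{T}\sigma_{s}^{2}\pih_{s}^{2}\,ds \leq C\,E\int_{t}^{T}(X^{\pih}_{s})^{2}\,ds$ and $E\int_{t}^{T}(X^{\pih}_{s})^{-2\gamma}\sigma_{s}^{2}\pih_{s}^{2}\,ds\leq C\,E\int_{t}^{T}(X^{\pih}_{s})^{2-2\gamma}\,ds$, both finite by the moment bound just established (trivially so in Case 1, where $2-2\gamma=0$).

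The hard part is the first step: securing uniform control of $\Uh_{xx}$ across all $x>0$ and $y\in\Bbb R$ simultaneously. Although the correction term $(T-s)\frac{\lambda^{2}(y)}{2}\,\partial_{x}^{2}\!\bigl[U_{T}'(x)^{2}/U_{T}''(x)\bigr]$ vanishes as $s\uparrow T$, it must be dominated by $|U_{T}''(x)|$ uniformly in $(x,y)$ so that $\delta$ may be chosen independently of $(x,y)$. This is precisely where Assumption \ref{utilityassumptions} is used in its full strength: the global sup/inf bounds on ratios of derivatives of $U_{T}$ to those of $M$ deliver the uniform comparability required for the argument, and the boundedness of $\lambda$ from Assumption \ref{modelassumptions} absorbs the $y$-dependence.
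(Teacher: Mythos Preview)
Your proposal is correct and follows essentially the same route as the paper: establish that $\sigma(y)\pih(t,x,y)/x$ is uniformly bounded (the paper states this as \eqref{linearboundpihat} without the detailed justification you supply), then pass to the exponential representation of $X^{\pih}$ to obtain positivity and $p$-th moment control, from which conditions (2)--(4) follow. One minor slip: you write $|\pih(s,x,y)|\leq Cx$, but strict positivity of $\sigma$ does not give a uniform lower bound on $\sigma$; what actually follows (and what you in fact use for the bounded $\Sigma_{s}$) is $|\sigma(y)\pih(s,x,y)|\leq Cx$, since the $\sigma$ in the denominator of \eqref{approxoptimalport} cancels against the $\sigma$ in \eqref{dscw}.
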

	
	\begin{proof}
	We begin by noting that progressive measurability of $\pih_{t}$ follows from the continuity of this function in the variable $t$.  In addition, Assumptions \ref{modelassumptions} and \ref{utilityassumptions} and the definition of $\pih(t,x,y)$ in \eqref{approxoptimalport} imply 
		\begin{equation} \label{linearboundpihat}
		\left | \sigma(y)\frac{\pih(t,x,y)}{x} \right| \leq C_{1}
		\end{equation} 
for some constant $C_{1}$, which, along with \eqref{dscw}, immediately implies that $\pih_{t}$ yields a strictly positive wealth process.  It remains to be shown that $\pih_{t}$ satisfies $E \left [\displaystyle \int \limits_{0}^{T} \sigma_{s}^{2} \pih_{s}^{2} \,ds \right ]  + E \left [\displaystyle \int \limits_{0}^{T} (X^{\pih}_{s})^{-2\gamma} \sigma_{s}^{2}\pih_{s}^{2}  \,ds \right ]  < \infty$, with $\gamma$ as in Definition \ref{admissibledefinition}.  Note that by \eqref{linearboundpihat}, we have $|\sigma_{s}^{2} \pih_{s}^{2}| \leq c_{1}(X^{\pih}_{s})^{2}$ for some constant $c_{1}$, so it is enough to show that $(X^{\pih}_{\cdot})^{p}$ is integrable, where $p= 2$ or $p = -2\gamma$.

	We apply Ito's formula to $\log (X^{\pih}_{\cdot})^{p}$ to get
	\begin{align*}
	\log (X^{\pih}_u)^{p}&= p\log X^{\pih}_u\\
&=\log x^{p} + p\int_t^u\left[\sigma(Y_s)\lambda(Y_s)\frac{\pih_s}{X^{\pih}_s}-\frac{1}{2}\sigma^2(Y_s)\left(\frac{\pih_s}{X^{\pih}_s}\right)^2 \right]ds + p\int_t^u\sigma(Y_s)\frac{\pih_s}{X^{\pih}_s}dW^{1}_s.
	\end{align*}
Let $M_u:=p\int_t^u\sigma(Y_s)\frac{\pih_s}{X^{\pih}_s}dW^{1}_s$, for $t\leq u\leq T$. From the boundedness of $\left|\frac{\sigma(y)\pih(t,x,y)}{x}\right|$ and $\lambda(y)$, it follows that there exists $c_{2}>0$ such that 
	\begin{align*}
	\log (X^{\pih}_u)^{p}&\leq \log x^{p}+c_{2}(u-t)+M_u\\
&\leq \log x^{p}+c_{2}T+M_u.
	\end{align*}
So \[(X^{\pih}_u)^{p}\leq x^{p}e^{c_{2}T}e^{M_u}.\]
Define $Z_u:=e^{M_u-\frac{1}{2}[M]_u}$ where $[M]_u=p^{2}\int_t^u \sigma_{s}^{2}\left(\frac{\pih_s}{X^{\pih}_s}\right)^2ds$. Since $\sigma(y)\frac{\pih(t,x,y)}{x}$ is a bounded function, $Z$ is a square-integrable martingale.  Then for a constant $c_{3} > 0$, we can write
\begin{equation} \label{uniformbound}
 (X^{\pih}_u)^{p}\leq x^{p}e^{\C3 T}Z_u\leq x^{p}e^{c_{3} T}\sup_{t\leq u\leq T}Z_u.
 \end{equation}
Define $K:=x^{p}e^{c_{3} T}\sup_{t\leq u\leq T}Z_u$, then 
\[E|K|\leq x^{p}e^{c_{3} T}(1+4EZ^2_T)<\infty.\]
This shows $\{(X^{\pih}_u)^{p}\}_{u\in[t,T]}$ is bounded uniformly in $u$ by an integrable random variable, and thus establishes 
	\begin{equation*}
	E \left [\displaystyle \int \limits_{0}^{T} \sigma_{s}^{2} \pih_{s}^{2} \,ds \right ]  + E \left [\displaystyle \int \limits_{0}^{T} (X^{\pih}_{s})^{-2\gamma} \sigma_{s}^{2}\pih_{s}^{2}  \,ds \right ]  < \infty.
	\end{equation*}
	\end{proof}

\subsection{Uniform bound of $G(X^{\pi}_{T\wedge \tau_{n}})$}
	\begin{lemma} \label{boundedintfunc}
Let $X^{\pi}_s$ be the wealth process given by the SDE \eqref{dscw} under the arbitrary admissible portfolio 
$\pi(t,X^{\pi}_{t},Y_{t})$ and assume $X^{\pi}_{t} = x$.  Under the assumptions of Theorem \ref{maintheorem},  $\{G(X^{\pi}_{T\wedge \tau_{n}})\}_{n =1}^{\infty}$ is uniformly bounded by an integrable random variable, where $G(x) = \log(x) + 1$ under Case 1 of Assumption \ref{utilityassumptions}, and $G(x) = x^{1-\alpha} + x^{1-\beta}$ for positive $\alpha, \beta \neq 1$, under Case 2 of Assumption \ref{utilityassumptions}.
	\end{lemma}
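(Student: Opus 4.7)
The plan is to produce a single integrable random variable $\Xi$ with $|G(X^{\pi}_{u})| \leq \Xi$ for every $u \in [t,T]$ a.s.; since $T \wedge \tau_{n} \leq T$, such a $\Xi$ then dominates the entire sequence $\{G(X^{\pi}_{T \wedge \tau_{n}})\}_{n}$. I would handle the two cases of Assumption \ref{utilityassumptions} separately, but the strategy is uniform: apply It\^o's formula to the relevant function of $X^{\pi}$, identify the stochastic integral as a genuine $L^{2}$-martingale via admissibility condition (4) of Definition \ref{admissibledefinition}, and bound its supremum by Doob's $L^{2}$ maximal inequality.

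For Case 1, setting $\phi_{s} := \sigma_{s}\pi_{s}/X^{\pi}_{s}$ and $M_{u} := \int_{t}^{u} \phi_{s}\,dW^{1}_{s}$, It\^o's formula on $\log X^{\pi}_{u}$ yields
\begin{equation*}
\log X^{\pi}_{u} = \log x + \int_{t}^{u}\!\bigl(\lambda_{s}\phi_{s} - \tfrac{1}{2}\phi_{s}^{2}\bigr)\,ds + M_{u}.
\end{equation*}
Admissibility condition (4) with $\gamma=1$ is precisely $E\!\int_{t}^{T} \phi_{s}^{2}\,ds < \infty$, so $M$ is a square-integrable martingale and $E \sup_{u}|M_{u}|^{2} \leq 4 E\!\int\phi^{2}\,ds < \infty$. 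Combining with Young's inequality $|\lambda_{s}\phi_{s}| \leq \tfrac{1}{2}(c_{1}^{2} + \phi_{s}^{2})$ produces the uniform upper bound
\begin{equation*}
|\log X^{\pi}_{u}| \leq |\log x| + \tfrac{1}{2}c_{1}^{2} T + \int_{t}^{T}\phi_{s}^{2}\,ds + \sup_{u \in [t,T]}|M_{u}|,
\end{equation*}
whose right-hand side is integrable and dominates $|G(X^{\pi}_{u})| \leq |\log X^{\pi}_{u}| + 1$.

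For Case 2 I would treat $(X^{\pi})^{1-\alpha}$ and $(X^{\pi})^{1-\beta}$ in turn; fix $p \in \{1-\alpha,\, 1-\beta\}$. It\^o's formula decomposes $(X^{\pi}_{u})^{p} = x^{p} + A_{u} + N_{u}$, where
\begin{equation*}
A_{u} = \int_{t}^{u}\!\Bigl[p\,(X^{\pi})^{p-1}\sigma\pi\lambda + \tfrac{p(p-1)}{2}(X^{\pi})^{p-2}\sigma^{2}\pi^{2}\Bigr]ds, \quad N_{u} = \int_{t}^{u}\! p\,(X^{\pi})^{p-1}\sigma\pi\,dW^{1}_{s}.
\end{equation*}
The algebraic key is the pointwise bound $(X^{\pi})^{-q} \leq 1 + (X^{\pi})^{-2\gamma}$ valid for every $0 \leq q \leq 2\gamma$, obtained by splitting on $X^{\pi} \gtrless 1$. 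Since in Case 2 we have $\alpha,\beta \leq \gamma$ and $\alpha+1,\beta+1 \leq 2\gamma$ (the latter using $\gamma > 1$), this applies with $q \in \{2\alpha,\, 2\beta,\, \alpha+1,\, \beta+1\}$. Multiplying through by $\sigma^{2}\pi^{2}$ and taking expectations, conditions (2) and (4) of Definition \ref{admissibledefinition} yield $E\langle N\rangle_{T} < \infty$ and $E\!\int_{t}^{T}|(X^{\pi})^{p-2}\sigma^{2}\pi^{2}|\,ds < \infty$; Cauchy--Schwarz together with $|\lambda|\leq c_{1}$ handles $E\!\int_{t}^{T}|(X^{\pi})^{p-1}\sigma\pi\lambda|\,ds$ similarly. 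Consequently $N$ is a true $L^{2}$-martingale with $E \sup_{u}|N_{u}|^{2} < \infty$ by Doob, and $E \sup_{u}|A_{u}| \leq E\!\int_{t}^{T}|\text{integrand}|\,ds < \infty$. Since $(X^{\pi})^{p} \geq 0$, one concludes $\sup_{u} (X^{\pi}_{u})^{p} \leq x^{p} + \sup_{u}|A_{u}| + \sup_{u}|N_{u}| \in L^{1}$; summing over $p \in \{1-\alpha,\,1-\beta\}$ produces the required dominator.

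The main obstacle is the case analysis underlying the exponent bound in Case 2: $\alpha$ and $\beta$ may each lie on either side of $1$, so the signs of $p$, $p-1$, and $p(p-1)$, together with the direction of the pointwise inequality $(X^{\pi})^{-q} \leq 1 + (X^{\pi})^{-2\gamma}$, must be verified separately in the sub-cases. Once those algebraic exponent inequalities are in hand, the remainder is a mechanical combination of It\^o's formula, Doob's maximal inequality, and Cauchy--Schwarz, parallel in spirit to the proof of Lemma \ref{admissibilitypihat} but using admissibility (4) in place of the a priori boundedness of $\sigma\pih/x$.
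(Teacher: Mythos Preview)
Your approach is essentially the same strategy as the paper's---apply It\^o to the relevant function of wealth, identify the stochastic integral as a true $L^{2}$ martingale via the admissibility conditions, and control the pathwise supremum by Doob's maximal inequality together with an $L^{1}$ bound on the drift. Your Case~1 argument matches the paper's almost verbatim.

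The one genuine difference is your handling of Case~2 when one of the exponents lies in $(0,1)$. You run the It\^o argument uniformly for $p\in\{1-\alpha,\,1-\beta\}$ and close it via the exponent inequality $\alpha+1,\beta+1\le 2\gamma$, which you correctly note requires $\gamma=\max\{\alpha,\beta\}>1$; this is indeed asserted in Definition~\ref{admissibledefinition}(4). The paper, however, does not lean on $\gamma>1$ here: for an exponent in $(0,1)$ it instead invokes Young's inequality $(X^{\pi})^{1-\gamma}\le C\bigl(1+(X^{\pi})^{2}\bigr)$ and bounds $(X^{\pi})^{2}$ directly from the SDE \eqref{dscw} using only admissibility condition~(2) and Doob's inequality, never touching condition~(4). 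This buys the paper coverage of the situation where \emph{both} $\alpha,\beta<1$ (so $\gamma<1$), which Assumption~\ref{utilityassumptions} does not exclude; your unified It\^o route would fail there because the pointwise bound $(X^{\pi})^{-(\alpha+1)}\le 1+(X^{\pi})^{-2\gamma}$ breaks down when $\alpha+1>2\gamma$. Conversely, whenever $\gamma>1$ your argument is cleaner in that it avoids the case split entirely. So: same skeleton, different treatment of the small-exponent sub-case, with the paper's version slightly more robust to the parameter range allowed by Assumption~\ref{utilityassumptions}.
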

	\begin{proof}
We will first consider case 1 where $G(x) = x^{1-\alpha} + x^{1-\beta}$.  Note that it is enough to prove $\{(X^{\pi}_{T\wedge \tau_{n}})^{1-\gamma}\}_{n=1}^{\infty}$, for some positive $\gamma \neq 1$, is uniformly bounded by an integrable random variable.  

If $0 < \gamma < 1$, then Young's inequality gives $(X^{\pi}_{T\wedge \tau_{n}})^{1-\gamma} \leq C \left(1 + (X^{\pi}_{T\wedge \tau_{n}})^{2} \right)$ for some constant $C$.  If we set $M_{u} := \int \limits_{t}^{u} \sigma_{s} \pi_{s} \,dW^{1}_{s}$, then $M_{u}$ is a martingale, and \eqref{dscw} gives 
	\begin{equation*}
	(X^{\pi}_{T\wedge \tau_{n}})^{2} \leq \C1\left (x^{2} + \int \limits_{0}^{T} \sigma_{s}^{2} \pi_{s}^{2} \lambda_{s}^{2} \,dt + \left(\sup \limits_{u \in [t,T]} M_{u}\right)^{2} \right )
	\end{equation*} 
for some constant $\C1$.  In particular, taking expectation yields, by Doob's maximal inequality, 
	\begin{equation*}
	E\left [(X^{\pi}_{T\wedge \tau_{n}})^{2} \right ] \leq \C2 \left (1 + E \left [(M_{T})^{2} \right ] \right)  = \C2\left (1 + E\left [\int \limits_{0}^{T} \sigma_{s}^{2} \pi_{s}^{2} \,ds \right ] \right) < \infty,
	\end{equation*}
for some constant $\C2$.  Therefore, $\{(X^{\pi}_{T\wedge \tau_{n}})^{2}\}_{n=1}^{\infty}$ is uniformly bounded by the integrable random variable $\xi := \C1\left (1 + x^{2} + \displaystyle \int \limits_{0}^{T} \sigma_{s}^{2} \pi_{s}^{2} \lambda_{s}^{2} \,ds + \left(\sup \limits_{u \in [t,T]} M_{u}\right)^{2} \right )$, implying $(X^{\pi}_{T\wedge \tau_{n}})^{1-\gamma} \leq C \left(1 + (X^{\pi}_{T\wedge \tau_{n}})^{2} \right)$ is uniformly bounded in $n$ by an integrable random variable.

In the case of $\gamma > 1$, we apply Ito's formula to $(X^{\pi}_{u})^{1-\gamma}$ to obtain 
	\begin{equation*}
	(X^{\pi}_{u})^{1-\gamma} = x^{1-\gamma} + (1-\gamma) \int \limits_{t}^{u} \lambda_{s} \sigma_{s} \pi_{s} (X^{\pi}_{s})^{-\gamma} - \frac{(1-\gamma)\gamma}{2} \sigma_{s}^{2} \pi_{s}^{2} (X^{\pi}_{s})^{-\gamma -1}  \,ds + (1-\gamma)\int \limits_{t}^{u} \sigma_{s} \pi_{s} (X^{\pi}_{s})^{-\gamma} \,dW^{1}_{s}.
	\end{equation*}
Thus, letting $Z_{u} := \int \limits_{t}^{u} \sigma_{s} \pi_{s} (X^{\pi}_{s})^{-\gamma} \,dW^{1}_{u}$, which is a martingale by the admissibility of $\pi_{s}$ (see Definition \ref{admissibledefinition}), we see that  
	\begin{equation} \label{ItoBound} 
	(X^{\pi}_{u})^{1-\gamma} \leq \C3\left(1 + \int \limits_{0}^{T} \sigma_{s}^{2} \pi_{s}^{2}  (X^{\pi}_{s})^{-2\gamma} + \sigma_{s}^{2} \pi_{s}^{2} (X^{\pi}_{s})^{-2} \,ds + \left (\sup \limits_{s \in [0,T]} Z_{u}\right)^{2}\right),
	\end{equation}
for some constant $\C3$.  Taking the expectation of both sides and applying Doob's maximal inequality gives
	\begin{equation*}
	E[(X^{\pi}_{u})^{1-\gamma}] \leq \C4\left(1 + E\left[\int \limits_{0}^{T} \sigma_{s}^{2} \pi_{s}^{2}  (X^{\pi}_{s})^{-2\gamma} + \sigma_{s}^{2} \pi_{s}^{2} (X^{\pi}_{s})^{-2} \,ds\right] + E[Z_{T}^{2}] \right).
	\end{equation*}
As $E[Z_{T}^{2}] = E\left [ \displaystyle \int \limits_{0}^{T} \sigma_{s}^{2} \pi_{s}^{2} (X^{\pi}_{s})^{-2\gamma} \,ds \right] < \infty$, the right hand side of the above inequality is finite.  Thus, the right hand side of inequality \eqref{ItoBound} serves as the uniform bound for $(X^{\pi}_{T\wedge \tau_{n}})^{1-\gamma}$ when $\gamma > 1$.

To prove the result in the case that $G(x) = 1 + \log(x)$, it is enough to show $\{\log(X^{\pi}_{T\wedge \tau_{n}})\}_{n=}^{\infty}$ is uniformly bounded by an integrable random variable.  We apply Ito's formula to $\log (X^{\pi}_{\cdot})$ to get
	\begin{equation*}
	\log (X^{\pi}_u) =\log x + \int_t^u\left[\sigma(Y_s)\lambda(Y_s)\frac{\pi_s}{X^{\pi}_s}-\frac{1}{2}\sigma^2(Y_s)\left(\frac{\pi_s}{X^{\pi}_s}\right)^2 \right]ds + \int_t^u \sigma(Y_s)\frac{\pi_s}{X^{\pi}_s}dW^{1}_s.
	\end{equation*}
As in the power case, we let $\E_{u} := \displaystyle \int_{t}^{u} \sigma(Y_{s})\frac{\pi_{s}}{X_{s}^{\pi}}dW^{1}_s$.  Then, by the admissibility of $\pi_{s}$ in Definition \ref{admissibledefinition}, $\E_{u}$ is a martingale, and so we can apply Doob's maximal inequality as above.  In particular, we have
	\begin{equation} \label{ItoBound2}
	\log(X^{\pi}_{u}) \leq \C5\left(1 + \int \limits_{0}^{T} \sigma^{2} \pi^{2} (X^{\pi}_{s})^{-2}\,ds + \left (\sup \limits_{s \in [t,T]} \E_{s}\right )^{2}\right)
	\end{equation}
for some constant $\C5$.  Thus, taking expectations gives 
	\begin{equation*}
	E[\log(X^{\pi}_{u})] \leq \C5\left (1 + E \left [\displaystyle \int \limits_{0}^{T} \sigma^{2} \pi^{2} (X^{\pi}_{s})^{-2} \,ds \right ] + E\left [\left (\sup \limits_{s \in [t,T]} \E_{s}\right )^{2} \right] \right ),
	\end{equation*}
and Doob's inequality gives, for some constant $\C6$, 
	\begin{equation*}
	E \left [\left (\sup \limits_{s \in [t,T]} \E_{s}\right)^{2} \right ] \leq \C6 E[\E_{T}^{2}] = \C6E\left [ \displaystyle \int \limits_{t}^{T} \sigma^{2} \pi^{2} (X^{\pi}_{s})^{-2} \,ds \right] < \infty,
	\end{equation*} 
with the last inequality following by the definition of admissibility given in Definition \ref{admissibledefinition}.  This establishes the right hand side of \eqref{ItoBound2} as the integrable random variable which uniformly bounds $\{\log\left(X^{\pi}_{T \wedge \tau_{n}}\right)\}_{n = 1}^{\infty}$.
\end{proof}

\end{document}